\theoremstyle{plain}
\newtheorem{thm}{Theorem}[section]
\newtheorem*{thm*}{Theorem}
\newtheorem*{cor*}{Corollary}
\newtheorem{prop}[thm]{Proposition}
\newtheorem{lem}[thm]{Lemma}
\newtheorem{cor}[thm]{Corollary}
\newtheorem{claim}{Claim}
\newtheorem*{claim*}{Claim}
\theoremstyle{definition}
\newtheorem{defn}[thm]{Definition}
\newtheorem{ex}[thm]{Example}
\newtheorem{rem}[thm]{Remark}
\newtheorem{setup}{Setup}
\theoremstyle{remark}
\numberwithin{equation}{thm}
\newcommand{\rmQ}{\mathrm{Q}}
\newcommand{\calX}{\mathcal{X}}
\newcommand{\fkm}{\mathfrak{m}}
\newcommand{\fkp}{\mathfrak{p}}
\def\Hom{\operatorname{Hom}}
\def\Im{\mathrm{Im}}
\def\Spec{\operatorname{Spec}}
\def\tr{\operatorname{tr}}
\def\Aut{\operatorname{Aut}}
\title[]{Traces of semi-invariants}
\author{Ela Celikbas}
\address{E. Celikbas: School of Mathematical and Data Sciences, West Virginia University, Morgantown, WV 26506, USA.}
\email{ela.celikbas@math.wvu.edu}
\author{J\"{u}rgen Herzog}
\address{J. Herzog: Fachbereich Mathematik, Universit\"at Duisburg-Essen, Fakult\"at f\"ur Mathematik, 45117 Essen, Germany}
\email{juergen.herzog@uni-essen.de}
\author{Shinya Kumashiro}
\address{S. Kumashiro: National Institute of Technology, Oyama College, 771 Nakakuki, Oyama, Tochigi, 323-0806, Japan}
\email{skumashiro@oyama-ct.ac.jp}
\thanks{2020 {\em Mathematics Subject Classification.} 13A50, 13H10, 13C05}
\thanks{{\em Key words and phrases.} rings of invariants, semi-invariants, trace, Gorenstein locus}
\thanks{S. Kumashiro was supported by JSPS KAKENHI Grant Number JP21K13766 and by Grant for Basic Science Research Projects from the Sumitomo Foundation (Grant number 2200259).}
\begin{document}

\begin{abstract}
This article investigates the traces of certain modules over rings of invariants associated with finite groups. More precisely, we provide a formula for computing the traces of arbitrary semi-invariants, thereby contributing to the understanding of the non-Gorenstein locus of rings of invariants. Additionally, we discuss applications of this formula, including criteria for rings of invariants to be Gorenstein on the punctured spectrum and nearly Gorenstein, as well as criteria for semi-invariants to be locally free.
%Using this formula, we establish criteria for rings of invariants to be Gorenstein on the punctured spectrum and nearly Gorenstein.}
\end{abstract}

\maketitle

%%%%%%%%%%%%%%%%%%%%%%%%%%%%%%%%%%%%%%%%%%%%%%%%%%%%%%%%%%%%
\section{Introduction}\label{section1}

The purpose of this article is to explore the traces of certain modules over rings of invariants. Let $R$ be a commutative Noetherian ring, and  let $M$ be a finitely generated $R$-module. The {\bf trace} of $M$, denoted as $\tr_R(M)$, is defined by 
\[
\tr_R(M):=\sum_{f\in\Hom(M, R)} f(M).
\]

The significance of studying traces of modules becomes evident through a straightforward observation: $\tr_R(M) =R$ if and only if there exists $n>0$ such that $M^n$ has a free summand. In the local case, $n=1$ suffices. Thus, the behavior of the trace of a module is closely linked to the decomposition of the module. Numerous studies leverage this observation, including the classification of indecomposable maximal Cohen-Macaulay modules over one-dimensional Cohen-Macaulay local rings of multiplicity 
2 (\cite[Section 7]{Bass}, also see \cite[Theorem 1.1]{IK}), and the investigation of the closedness of the non-Gorenstein locus of $R$ (\cite[p.199, before Theorem 11.42]{LW}). 
%When $R$ is local, one can choose $1$ as $n$.  

Moving forward,  we survey rings of invariants. For a subgroup $G$ of the automorphism group $\Aut(R)$, the subring of $R$, denoted as $R^G$, is defined as
\[
R^G:=\{a\in R \mid \sigma(a) = a \text{ for all $\sigma \in G$}\},
\]
and is known as the {\bf ring of invariants}. The study of the invariant theory of finite groups is a classical subject that cannot be comprehensively covered in this article. For further information, one can consult the sources such as \cite{Ben, Stan}. Among the studies of rings of invariants, some of the most famous results include the Cohen-Macaulay property of rings of invariants and the characterization of Gorenstein invariants, as presented in the following theorem.

\begin{thm} \label{t11} 
\begin{enumerate}[\rm(1)] 
\item {\rm (\cite{HE}):} Assume $R$ is a Cohen-Macaulay ring, and $G$ is a finite group whose order is invertible in $R$. Then, $R^G$ is Cohen-Macaulay.
\item {\rm (\cite{Wat}):} Let $K$ be a field of characteristic $0$, $R=K[X_1, \dots, X_d]$, and $G$ a finite subgroup of $\mathrm{GL}(K^d)$. (We identify $\sigma =(a_{ij}) \in \mathrm{GL}(K^d)$ with the automorphism $\varphi: R \to R; X_j\mapsto \sum_{i=1}^d a_{ij}X_i$.) Consider the conditions:
\begin{enumerate}[\rm(i)] 
\item $R^G$ is Gorenstein.
\item $G\subseteq \mathrm{SL}(K^d)$. 
\end{enumerate}
Then, (ii) $\Rightarrow$ (i) holds. If $G$ has no pseudo-reflection (see Definition \ref{d34}), (i) $\Rightarrow$ (ii) holds as well.
\end{enumerate}
\end{thm}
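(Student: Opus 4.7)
For part (1), I would use the Reynolds operator $\rho : R \to R^G$ defined by $\rho(a) = \frac{1}{|G|} \sum_{\sigma \in G} \sigma(a)$; this is well-defined because $|G|$ is a unit in $R$. It is $R^G$-linear and restricts to the identity on $R^G$, so it exhibits $R^G$ as an $R^G$-module direct summand of $R$. Given a homogeneous system of parameters $x_1, \dots, x_d$ of $R^G$, integrality of the extension $R^G \subseteq R$ makes it a system of parameters of $R$ as well, hence an $R$-regular sequence by the Cohen--Macaulay hypothesis on $R$. To pass regularity back to $R^G$, take $r \in R^G$ with $x_{k+1} r \in (x_1, \dots, x_k) R^G$: writing $r = \sum x_i s_i$ in $R$ by $R$-regularity and then applying $\rho$ gives $r = \sum x_i \rho(s_i) \in (x_1, \dots, x_k) R^G$, so $R^G$ is Cohen--Macaulay.

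For part (2), the conceptual heart is the identification of graded $R^G$-modules
\[
\omega_{R^G} \cong R^{\det^{-1}} := \{ r \in R : \sigma(r) = \det(\sigma)^{-1} r \text{ for all } \sigma \in G \}
\]
(up to a degree shift). I would derive this from $\omega_R \cong R(-d)$ together with the observation that the natural $G$-action on $\omega_R$ coming from $dX_1 \wedge \cdots \wedge dX_d$ is twisted by $\det$, so that taking $G$-invariants under the twisted action picks out precisely the $\det^{-1}$-semi-invariants (and one can move $(-)^G$ past the computation of $\omega$ because the Reynolds operator gives exactness of $G$-invariants in characteristic zero). The implication (ii) $\Rightarrow$ (i) then follows immediately: if $G \subseteq \mathrm{SL}(K^d)$ then $\det \equiv 1$, so $R^{\det^{-1}} = R^G$ and $\omega_{R^G}$ is free of rank one, i.e., $R^G$ is Gorenstein.

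For (i) $\Rightarrow$ (ii) under the no-pseudo-reflection hypothesis, the plan is to invoke the description of the divisor class group. When $G$ has no pseudo-reflections, the assignment $\chi \mapsto [R^\chi]$ gives an injection from the character group $\widehat{G^{\mathrm{ab}}}$ into $\mathrm{Cl}(R^G)$ (Nakajima). Under the identification above, $[\omega_{R^G}]$ corresponds to $\det^{-1}$; since $R^G$ being Gorenstein forces $[\omega_{R^G}]$ to be trivial in $\mathrm{Cl}(R^G)$, injectivity implies $\det^{-1}$ is the trivial character, whence $G \subseteq \mathrm{SL}(K^d)$.

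The main obstacle I anticipate is justifying the injectivity in the class-group step: one must rule out that $R^\chi$ is principal in $R^G$ for a non-trivial character $\chi$. Any such principal generator would be a semi-invariant whose zero locus is a $G$-stable union of hyperplanes, and tracking the eigenvalues along the normal directions for elements of the stabilizer of an irreducible component produces a pseudo-reflection with that component as reflecting hyperplane, contradicting the hypothesis. Carrying this argument through with care (reducing to irreducible factors of the generator and analyzing the action on their $G$-orbits) is the technical heart of the proof.
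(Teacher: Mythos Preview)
The paper does not prove Theorem~\ref{t11}; it is quoted in the introduction as known background, with attribution to \cite{HE} and \cite{Wat}, and no argument is given. So there is nothing to compare against, and I will simply comment on your outline.

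Your treatment of (1) is the standard Reynolds-operator argument and is correct. As written you pass through a \emph{homogeneous} system of parameters, so you are implicitly in the graded situation; the statement in the paper is for arbitrary Cohen--Macaulay $R$, which requires a preliminary localization, but the same splitting argument then applies verbatim.

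For (2), the route via $\omega_{R^G}\cong R^{\det^{-1}}$ is exactly the right one (the paper itself invokes this identification later, citing \cite[Theorem 6.4.2(b)]{BH}), and your argument for (ii)~$\Rightarrow$~(i) is clean. For (i)~$\Rightarrow$~(ii), reducing to injectivity of $\chi\mapsto[R^\chi]$ into $\mathrm{Cl}(R^G)$ is also the correct strategy. The gap is in your sketch of \emph{why} injectivity holds: you assert that a principal generator $f$ of $R^\chi$ has zero locus equal to a $G$-stable union of hyperplanes, but nothing you have said forces the irreducible factors of $f$ to be linear. The observation that actually drives the argument is different: the fixed-point set of any nontrivial linear automorphism is a proper \emph{linear} subspace, so a nontrivial $\sigma\in G$ can act trivially on an irreducible hypersurface $V(p)$ only when $V(p)$ is already a hyperplane, and then $\sigma$ is a pseudo-reflection. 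This is what shows that, in the absence of pseudo-reflections, the extension $R^G\subset R$ is unramified in codimension one, after which the injection $\widehat{G^{\mathrm{ab}}}\hookrightarrow\mathrm{Cl}(R^G)$ follows from the standard exact sequence for class groups under such covers. Your conclusion is correct, but the mechanism producing the pseudo-reflection is not the one you describe.
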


%Ela: In the previous theorem, I added commas after ``Then" in both items for better readability, and I also included a colon after ``Consider the following conditions" and replaced ``too" with "as well" at the end.

%Now we explain the purpose of this article in detail. 

In what follows, let $K$ be a field, $R=K[X_1, \dots, X_d]$ be the polynomial ring over $K$, and $G$ be a finite subgroup of $\mathrm{GL}(K^d)$. The purpose of this article is to refine Theorem \ref{t11}(2). Specifically,  we aim to provide a method for determining the non-Gorenstein locus of $R^G$,
\[
\{ \fkp\in \Spec R^G \mid R^G_\fkp \text{ is not Gorenstein} \},
\]
by presenting a formula for computing the traces of semi-invariants over rings of invariants. Here, recall that for a group homomorphism $\calX:G \to \mathrm{GL}(K)$, 
\[
R^\calX:=\{a\in R \mid \sigma(a) = \calX(\sigma)a \text{ for all $\sigma \in G$}\} 
\]
is an $R^G$-module and is called the {\bf semi-invariants}. In general, the non-Gorenstein locus can be computed by the trace of the canonical module (see Remark \ref{f22}(4)). Additionally, the canonical module of $R^G$ is the semi-invariants of a certain group homomorphism called the inverse determinant character (see before Corollary \ref{c41}). Therefore, to compute the non-Gorenstein locus of $R^G$, it is sufficient to provide a formula for computing the traces of semi-invariants. Indeed, we give such a formula for arbitrary semi-invariants under certain assumptions. The main result of this article is captured in the following theorem.

%(Ela:  I made small changes in the first sentence and added a comma after ``nonzero" in the next theorem.)

\begin{thm} {\rm (Theorem \ref{t38})} \label{t12}
Let $K$ be an algebraically closed field, and let $G$ be a finite abelian subgroup of $\mathrm{GL}(K^d)$ generated by $\sigma_1, \dots, \sigma_\ell$. After a suitable choice of a basis for $K^d$, we may assume that each $\sigma_i$ is a diagonal matrix with diagonal entries $\xi_i^{t_{i1}}, \xi_i^{t_{i2}}, \dots, \xi_i^{t_{id}}$ for some non-negative integers $t_{ij}$ and the $n_i$th primitive root $\xi_i$ of $1\in K$. We may further assume that $\gcd(t_{i1}, \dots, t_{id}, n_i) = 1$ for all $1 \le i \le \ell. $ 
With this notation, we also assume that $n_1, \dots, n_\ell$ are pairwise coprime, and $G$ has no pseudo-reflection. 
Then, for all characters $\calX$, $R^\calX$ is nonzero, and the formula 
\[
\tr_{R^G} (R^\calX) = R^\calX R^{\calX^{-1}}
\] 
holds, where $\calX^{-1}$ denotes the inverse character of $\calX$, mapping $\sigma \in G$ to $\calX(\sigma^{-1}) \in \mathrm{GL}(K)$. 
\end{thm}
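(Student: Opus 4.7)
The plan is to exploit the explicit description of $R^{\mathcal{X}}$ as a monomial $K$-subspace. Since each $\sigma_i$ is diagonal with $\sigma_i(X_j) = \xi_i^{t_{ij}} X_j$, the monomials form a common eigenbasis for $G$, and writing $\mathcal{X}(\sigma_i) = \xi_i^{s_i}$ gives
\[
R^{\mathcal{X}} = \bigoplus_{\alpha \in L_{\mathcal{X}}} K \cdot X^{\alpha}, \qquad L_{\mathcal{X}} := \Bigl\{ \alpha \in \mathbb{Z}_{\ge 0}^d : \sum_{j} t_{ij}\alpha_j \equiv s_i \pmod{n_i} \text{ for all } i \Bigr\}.
\]
In particular $R = \bigoplus_{\mathcal{X}} R^{\mathcal{X}}$ as $K$-vector spaces and $R^{\mathcal{X}} \cdot R^{\mathcal{X}'} \subseteq R^{\mathcal{X} \mathcal{X}'}$. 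To see $R^{\mathcal{X}} \neq 0$, the condition $\gcd(t_{i1},\ldots,t_{id},n_i) = 1$ makes each linear form $\sum_j t_{ij}\alpha_j$ surject onto $\mathbb{Z}/n_i$, producing integers $\beta_{ij}$ with $\sum_j t_{ij}\beta_{ij} \equiv s_i \pmod{n_i}$ for every fixed $i$; the pairwise coprimality of the $n_i$ lets the Chinese Remainder Theorem glue these into a single tuple $\alpha \in \mathbb{Z}^d$, and adding a large common multiple of $\prod_i n_i$ makes $\alpha$ non-negative.

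The inclusion $R^{\mathcal{X}^{-1}} R^{\mathcal{X}} \subseteq \tr_{R^G}(R^{\mathcal{X}})$ is immediate, since for every $b \in R^{\mathcal{X}^{-1}}$ the map $a \mapsto b a$ is an $R^G$-linear homomorphism $R^{\mathcal{X}} \to R^G$ with image $b R^{\mathcal{X}}$. For the reverse inclusion, fix $0 \neq a_0 \in R^{\mathcal{X}}$. The $\mathcal{X}$-isotypic component $\mathrm{Frac}(R)^{\mathcal{X}}$ is a $1$-dimensional $\mathrm{Frac}(R^G)$-space spanned by $a_0$ (because $\sigma(a/a_0) = a/a_0$ for any $a \in \mathrm{Frac}(R)^{\mathcal{X}}$), so every $\phi \in \Hom_{R^G}(R^{\mathcal{X}}, R^G)$ is multiplication by the single element $b := \phi(a_0)/a_0 \in \mathrm{Frac}(R)$; applying $\sigma$ to this formula shows $b \in \mathrm{Frac}(R)^{\mathcal{X}^{-1}}$. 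Proving $b \in R$ then finishes the argument, since $R \cap \mathrm{Frac}(R)^{\mathcal{X}^{-1}} = R^{\mathcal{X}^{-1}}$ by the isotypic decomposition.

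Write $b = p/q$ with $p, q \in R$ coprime; from $b R^{\mathcal{X}} \subseteq R$ and the UFD property, $q$ divides every element of $R^{\mathcal{X}}$, hence every monomial $X^{\alpha}$ with $\alpha \in L_{\mathcal{X}}$. A polynomial in $R$ that divides a monomial is itself a monomial up to scalar, so $q = c \prod_j X_j^{m_j}$ with $c \in K^\times$ and $m_j \le \alpha_j$ for every $\alpha \in L_{\mathcal{X}}$. The no-pseudo-reflection hypothesis now translates into the strengthened arithmetic condition
\[
\gcd\bigl(\{t_{ik} : k \neq j\} \cup \{n_i\}\bigr) = 1 \quad \text{for all } i, j,
\]
because a non-identity element $\prod_i \sigma_i^{a_i}$ fixes the hyperplane $\{X_j = 0\}$ precisely when $n_i \mid a_i t_{ik}$ for every $k \neq j$ and every $i$, and the coprimality of the $n_i$ decouples these conditions across the index $i$. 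Repeating the CRT argument of the first paragraph but restricted to the variables indexed by $\{k : k \neq j\}$ then produces a solution $\alpha \in L_{\mathcal{X}}$ with $\alpha_j = 0$; this forces $m_j = 0$ for every $j$, so $q \in K^\times$ and $b \in R^{\mathcal{X}^{-1}}$. Hence $\phi(R^{\mathcal{X}}) = b R^{\mathcal{X}} \subseteq R^{\mathcal{X}^{-1}} R^{\mathcal{X}}$, and summing over $\phi$ yields $\tr_{R^G}(R^{\mathcal{X}}) \subseteq R^{\mathcal{X}^{-1}} R^{\mathcal{X}}$.

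The main obstacle is the step in the third paragraph where ``$G$ has no pseudo-reflection'' must be converted into the precise arithmetic gcd condition needed to feed into CRT so that $L_{\mathcal{X}}$ meets each coordinate hyperplane $\{\alpha_j = 0\}$. The pairwise coprimality of the $n_i$ is essential twice: once to diagonalise the eigenvalue condition for a pseudo-reflection into parallel congruences modulo the individual $n_i$, and once in the CRT gluing of partial solutions both for non-vanishing of $R^{\mathcal{X}}$ and for producing monomials in $R^{\mathcal{X}}$ not divisible by any prescribed $X_j$.
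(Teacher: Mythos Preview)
Your proof is correct and follows essentially the same route as the paper: both arguments reduce the trace computation to showing that any $b\in\mathrm{Frac}(R)$ with $bR^{\calX}\subseteq R^G$ already lies in $R^{\calX^{-1}}$, and both establish this by proving $\gcd(R^{\calX})=1$ via the arithmetic translation of ``no pseudo-reflection'' into $\gcd(\{t_{ik}:k\neq j\}\cup\{n_i\})=1$ together with a CRT argument producing a monomial in $R^{\calX}$ that avoids each variable $X_j$. The only organizational differences are that the paper separates these steps into standalone lemmas (a colon-ideal formula, a $\gcd=1$ criterion, and a general CRT proposition for simultaneous linear congruences) and routes through $R^{(1,\dots,1)}$ before taking powers, whereas you construct the needed monomials in $R^{\calX}$ directly.
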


As a consequence of Theorem \ref{t12}, we obtain a criterion for  a semi-invariants to be locally free on the punctured spectrum (Theorem \ref{t311}). In particular, we derive a criterion for rings of invariants to be Gorenstein on  the punctured spectrum (Corollary \ref{c41}). We further explore the nearly Gorenstein property, which was recently introduced and studied with the aim of developing a theory for rings that are close to being Gorenstein (\cite{HHS}).

The rest of this article is organized as follows. In Section \ref{section2}, we survey fundamental properties of traces of modules, which we use throughout this article. In Section \ref{section3}, we prove Theorem \ref{t12}. In Section \ref{section4}, we apply Theorem \ref{t12} with the canonical module and  provide the criteria noted in the previous paragraph. Examples illustrating our results are also presented.

%The rest of this article is organized as follows. In Section \ref{section2} we survey fundamental properties of traces of modules, which we use throughout this article. In Section \ref{section3} we prove Theorem \ref{t12}. In Section \ref{section4} we apply Theorem \ref{t12} with the canonical module, and we give criteria noted in the previous paragraph. Examples illustrating our results are also given. 

%%%%%%%%%%%%%%%%%%%%%%%%%%%%%%%%%%%%%%%%%%%%%%%%%%%%%%%%%%%%
\section{Traces of modules}\label{section2}

Let $A$ be a commutative Noetherian ring, and let $M$ be a finitely generated $A$-module. In this section, we summarize basic properties of trace. 

\begin{defn}
\[
\tr_A(M):=\sum_{f\in\Hom(M, A)} f(M)
\]
is called the {\bf trace} of $M$.
\end{defn}

\begin{rem}\label{f22} 
\begin{enumerate}[\rm(1)] 
\item $\tr_A(M) = \Im(\mathrm{ev})$, where $\mathrm{ev}:M\otimes_A \Hom(M, A) \to A; x\otimes f \mapsto f(x)$ for $x\in M$ and $f\in \Hom(M, A)$.
\item (\cite[Lemma 1.1]{HHS}) Let $I$ be an ideal of $A$. If $I$ contains a non-zerodivisor of $A$, then $$\tr_A(I)=(A:_{\rmQ(A)}~I)I,$$
where $\rmQ(A)$ denotes the total ring of fraction of $A$. 
\item (\cite[Proposition 2.8(viii)]{Lin}) %Suppose that $A$ is a Noetherian ring and $M$ is a finitely generated $A$-module. Then, 
$S^{-1} \tr_A(M) = \tr_{S^{-1}A} S^{-1}M$ for all multiplicative closed subset $S$ of $A$.  
%$\tr_A(M)_\fkp = \tr_{A_\fkp} (M_\fkp)$ for all $\fkp \in \Spec A$. 
\item Suppose that $A$ is a Noetherian graded ring having the unique graded maximal ideal, and $M$ is a finitely generated graded $A$-module. Then, $\tr_A(M)=A$ if and only if $M$ has a $A$-free summand. Furthermore, letting ${}^*\Spec A$ be the set of graded prime ideals of $A$, we have 
\[
\{ \fkp\in {}^*\Spec A \mid \fkp\not\supseteq \tr_A(M) \} = \{ \fkp\in {}^*\Spec A \mid M_\fkp \text{ has an $A_\fkp$-free summand} \}.
\]
%Suppose that $A$ is a Noetherian graded ring having unique graded maximal ideal and $M$ is a finitely generated graded $A$-module. Then, $\tr_A(M)=A$ if and only if $M$ has a $A$-free summand. Therefore, by (3), we have 
%\[
%\{ \fkp\in \Spec A \mid \fkp\not\supseteq \tr_A(M) \} = \{ \fkp\in \Spec A \mid M_\fkp \text{ has an $A_\fkp$-free summand} \}.
%\]
\end{enumerate}
\end{rem}

\begin{proof} 
(4): The proof of the former part proceeds almost the same way as in the local case; see \cite[Proposition 2.8(iii)]{Lin}. 
We now prove the latter part. 

$(\supseteq)$: Let $\fkp\in {}^*\Spec A$ such that $M_\fkp$ has an $A_\fkp$-free summand. Then, $\tr_A(M)_\fkp = \tr_{A_\fkp} (M_\fkp)= A_\fkp$ by (3). Hence, $\fkp\not\supseteq \tr_A(M)$. 

$(\subseteq)$: Let $\fkp\in {}^*\Spec A$ such that $\fkp\not\supseteq \tr_A(M)$. Let $S$ be the set of homogeneous elements of $A$ not belonging to $\fkp$. Then $\tr_{S^{-1} A} S^{-1} M = S^{-1} \tr_A(M) = S^{-1} A$ since $\tr_A(M)$ is a graded ideal of $A$ and $\fkp\not\supseteq \tr_A(M)$. Since $S^{-1} A$ has the unique graded maximal ideal $S^{-1} \fkp$, it follows that $S^{-1} M$ has an $S^{-1} A$-free summand. By localizing at $\fkp$, we obtain that $M_\fkp$ has an $A_\fkp$-free summand. 
\end{proof}

%%%%%%%%%%%%%%%%%%%%%%%%%%%%%%%%%%%%%%%%%%%%%%%%%%%%%%%%%%%%
\section{The Trace of $R^\calX$}\label{section3}

\begin{setup}\label{setup1}
In what follows, throughout this article, let 
\begin{itemize}
\item $K$ be an algebraically closed field, %$K$ be an algebraically closed field of characteristic $0$, 
\item $R=K[X_1, X_2, \dots, X_d]$ be the polynomial ring over $K$ with $d\ge 2$ and $\deg X_j=1$ for $1 \le j \le d$, and 
\item $G=\langle \sigma_1, \dots, \sigma_\ell \rangle\subseteq \mathrm{GL}(K^{d})$ be a finite abelian group whose order is not divisible by the characteristic of $K$. After a suitable choice of a basis for $K^d$, we may assume that each $\sigma_i$ is a diagonal matrix with diagonal entries $\xi_i^{t_{i1}}, \xi_i^{t_{i2}}, \dots, \xi_i^{t_{id}}$ for some non-negative integers $t_{ij}$ and the $n_i$th primitive root $\xi_i$ of $1\in K$.
\end{itemize}
We may assume that $\gcd(t_{i1}, \dots, t_{id}, n_i) =1$ for all $1 \le i \le \ell$. 
With this notation, the graded subring
\[
R^G:=\{a\in R \mid \sigma(a) = a \text{ for all $\sigma \in G$}\}
\]
of $R$ is called the {\bf ring of invariants}. Let $\fkm_G$ be the graded maximal ideal of $R^G$. 
A group homomorphism $\calX: G \to \mathrm{GL}(K)$ is called a {\bf character} of $G$. For a character $\calX$, we denote the {\bf inverse character} of $\calX$ by $\calX^{-1}$, which maps $\sigma \in G$ to $\calX(\sigma^{-1}) \in \mathrm{GL}(K)$. 
Each character $\calX$ defines an $R^G$-module 
\[
R^\calX:=\{a\in R \mid \sigma(a) = \calX(\sigma)a \text{ for all $\sigma \in G$}\}, 
\]
and we call it the {\bf semi-invariants of weight $\calX$}. In our assumption on $G$, $\calX$ is determined by $\calX(\sigma_i)$ for all $1 \le i \le \ell$, and $\calX(\sigma_i)$ must be $\xi_i^{s_i}$ for some $1 \le s_i \le n_i$ since $\sigma_i^{n_i} = 1_G$.
For a character $\calX$ such that $\calX(\sigma_i)=\xi_i^{s_i}$ for $1 \le i \le \ell$, we denote $R^\calX$ by $R^{(s_1, \dots, s_\ell)}$ when we want to clarify the action of $\calX$. 
\end{setup}

\begin{rem} \label{rem0}
The following statements hold true.
\begin{enumerate}[\rm(1)] 
\item $\displaystyle R=\bigoplus_{\calX\text{ is a character}} R^\calX$. 
\item $R^\calX$ is a maximal Cohen-Macaulay $R^G$-module generated by monomials, provided $R^\calX \ne 0$. 
\item $R^\calX \ne 0$ if and only if $R^{\calX^{-1}} \ne 0$. 
\item $R^\calX R^{\calX^{-1}} \subseteq R^G$. 
\item $R^\calX$ is a torsion-free $R^G$-module of rank $1$, provided $R^\calX \ne 0$. 
\end{enumerate}
\end{rem}

\begin{proof}
(1): This is clear.

(2): It is straightforward to check that $R^\calX$ is an $R^G$-module generated by monomials. Set $n=\prod_{i=1}^\ell n_i$. Then, $X_1^n, \dots, X_d^n\in R^G$, and $X_1^n, \dots, X_d^n$ form a regular sequence on $R$. Thus, $R$ is a Cohen-Macaulay $R^G$-module of dimension $d$, and so is $R^\calX$ by (1). 

(3): Suppose that $R^\calX \ne 0$. By (2), we can choose a nonzero monomial $f\in R^\calX$. Choose a positive integer $s$ such that $(X_1^n\cdots X_d^n)^s/f$ is a monomial. Since $(X_1^n\cdots X_d^n)^s\in R^G$, we observe that $(X_1^n\cdots X_d^n)^s/f \in R^{\calX^{-1}}$.

(4): Let $f\in R^{\calX^{-1}}$ and $g\in R^\calX$. Then, $\sigma(fg) = \sigma(f)\sigma(g) = \calX^{-1}(\sigma)f{\cdot}\calX(\sigma)g = \calX(\sigma^{-1})f{\cdot}\calX(\sigma)g = fg$. Hence, $fg\in R^G$. 

(5): We can choose a nonzero element $f\in R^{\calX^{-1}}$ by (3). Then, $fR^{\calX} \cong R^\calX$ and $fR^{\calX}\subseteq R^G$ by (4). Since $fR^{\calX}$ is a torsion-free $R^G$-module of rank $1$, so is $R^\calX$.
\end{proof}

\begin{lem}\label{l32}
Set $S=R^G\setminus \{0\}$  as a multiplicative closed subset of $R^G$. Then, 
\[
\tr_{R^G}(R^\calX) = (R^G:_{S^{-1}R} R^\calX) R^\calX.
\]
\end{lem}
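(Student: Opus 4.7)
The plan is to embed $R^\calX$ as an ideal of $R^G$ via a nonzero element of $R^{\calX^{-1}}$, apply Remark~\ref{f22}(2) to compute the trace of that ideal, and then translate the resulting colon back into $S^{-1}R$. Concretely, I would pick a nonzero $f \in R^{\calX^{-1}}$, guaranteed by Remark~\ref{rem0}(3). By Remark~\ref{rem0}(4), $I := fR^\calX$ is contained in $R^G$, and since $R$ is a domain with $f \neq 0$, multiplication by $f$ is an $R^G$-module isomorphism $R^\calX \xrightarrow{\;\sim\;} I$. For any nonzero $x \in R^\calX$ (which exists by Remark~\ref{rem0}(2)), $fx$ is a nonzero element of $R^G$, so $I$ is an ideal of $R^G$ containing a non-zerodivisor. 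Remark~\ref{f22}(2) then yields
$$\tr_{R^G}(R^\calX) = \tr_{R^G}(I) = (R^G :_{\rmQ(R^G)} I) \cdot I.$$

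Next I would show that ``multiplication by $f$'' induces an $R^G$-module isomorphism $\psi : (R^G :_{\rmQ(R^G)} I) \xrightarrow{\;\sim\;} (R^G :_{S^{-1}R} R^\calX)$, $y \mapsto yf$. For $\psi$ to be well defined, note that $\rmQ(R^G) \subseteq S^{-1}R$, so $yf \in S^{-1}R$, and $(yf)R^\calX = y \cdot I \subseteq R^G$. For surjectivity, given $z \in (R^G :_{S^{-1}R} R^\calX)$, I claim $z/f \in \rmQ(R^G)$: picking any nonzero $x \in R^\calX$, both $zx$ (since $zR^\calX \subseteq R^G$) and $fx$ (since $fR^\calX \subseteq R^G$) lie in $R^G$ with $fx \neq 0$, whence $z/f = (zx)/(fx) \in \rmQ(R^G)$. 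Then $(z/f) \cdot I = zR^\calX \subseteq R^G$, so $z/f \in (R^G :_{\rmQ(R^G)} I)$ and $\psi(z/f) = z$. Injectivity is immediate since $f$ is a non-zerodivisor of $R$.

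Finally, applying $\psi$ to the first formula gives
$$(R^G :_{\rmQ(R^G)} I) \cdot I = (R^G :_{\rmQ(R^G)} I) \cdot fR^\calX = \psi\bigl((R^G :_{\rmQ(R^G)} I)\bigr) \cdot R^\calX = (R^G :_{S^{-1}R} R^\calX) \cdot R^\calX,$$
which is the claimed identity. The only subtle point is verifying $z/f \in \rmQ(R^G)$ rather than merely in $S^{-1}R$; the observation $z/f = (zx)/(fx)$ with $x \in R^\calX$ uses precisely the semi-invariant structure of $R^\calX$ and $R^{\calX^{-1}}$, and is where the main work of the lemma lies.
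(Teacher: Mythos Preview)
Your proof is correct and follows essentially the same approach as the paper: both embed $R^\calX$ as the ideal $I=fR^\calX\subseteq R^G$ via a nonzero $f\in R^{\calX^{-1}}$ and invoke Remark~\ref{f22}(2). The only cosmetic difference is in the finish: the paper proves the inclusion $(R^G:_{\rmQ(R^G)}I)\subseteq f^{-1}(R^G:_{S^{-1}R}R^\calX)$ and then handles the reverse containment of traces by directly exhibiting the multiplication maps $\hat\alpha:R^\calX\to R^G$, whereas you prove the equality $f\cdot(R^G:_{\rmQ(R^G)}I)=(R^G:_{S^{-1}R}R^\calX)$ in one stroke via the trick $z/f=(zx)/(fx)$ with $x\in R^\calX$---a slightly slicker bookkeeping, but the same idea. (Minor quibbles: the trivial case $R^\calX=0$ should be noted, and the existence of a nonzero $x\in R^\calX$ follows from Remark~\ref{rem0}(3), not (2).)
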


\begin{proof}
We may assume that $R^\calX$ is nonzero. Let $f\in R^{\calX^{-1}}$ be a nonzero element (see Remark \ref{rem0}(3)). Then, $fR^{\calX} \cong R^\calX$ and $fR^{\calX}\subseteq R^G$ by Remark \ref{rem0}(4). Hence, $\tr_{R^G}(R^\calX) = \tr_{R^G}(fR^\calX) = (R^G:_{\rmQ(R^G)}~fR^\calX) fR^\calX$ by Remark \ref{f22}. We have 
\[
R^G:_{\rmQ(R^G)} fR^\calX = (R^G:_{S^{-1}R} fR^\calX) \cap \rmQ(R^G) \subseteq R^G:_{S^{-1}R} fR^\calX = f^{-1} (R^G:_{S^{-1}R} R^\calX). 
\]
It follows that 
\[
\tr_{R^G}(R^\calX) \subseteq f^{-1} (R^G:_{S^{-1}R} R^\calX) fR^\calX = (R^G:_{S^{-1}R} R^\calX) R^\calX. 
\] 
On the other hand, for each $\alpha \in R^G:_{S^{-1}R} R^\calX$, we can consider an $R^G$-linear homomorphism 
\[
\hat{\alpha}:R^\calX \to R^G; h \mapsto \alpha h
\] 
for $h\in R^\calX$. Since $\Im \hat{\alpha} = \alpha R^\calX$, it follows that $(R^G:_{S^{-1}R} R^\calX) R^\calX \subseteq \tr_{R^G}(R^\calX)$. Hence, we have $\tr_{R^G}(R^\calX) = (R^G:_{S^{-1}R} R^\calX) R^\calX$. 
\end{proof}

In general, computing $R^G:_{S^{-1}R} R^\calX$ for a given semi-invariant $R^\calX$ of weight $\calX$ can be challenging. Thus, in the following, we provide a computable method for $R^G:_{S^{-1}R} R^\calX$. To state our assertion simply, let $A$ be an infinite subset of $R$, and we say that $\gcd(A) = 1$ if there exists a finite subset $B$ of $A$ such that $\gcd(B)=1$.

\begin{prop}\label{p33}
$R^G:_{S^{-1} R} R^\calX \supseteq R^{\calX^{-1}}$ holds. Moreover, if $\gcd (R^\calX) =1$, then $R^G:_{S^{-1} R} R^\calX = R^{\calX^{-1}}$. %In this case, we obtain that $\tr_{R^G}(R^\calX) = R^\calX R^{\calX^{-1}}$.
\end{prop}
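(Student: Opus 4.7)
The first inclusion is immediate from the tools already established: every $f \in R^{\calX^{-1}}$ lies in $R \subseteq S^{-1} R$, and Remark \ref{rem0}(4) gives $f R^\calX \subseteq R^{\calX^{-1}} R^\calX \subseteq R^G$, so $f \in R^G :_{S^{-1} R} R^\calX$.

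For the reverse inclusion under the hypothesis $\gcd(R^\calX) = 1$, the plan is first to bring an arbitrary $\alpha \in R^G :_{S^{-1} R} R^\calX$ back into $R$, and then to identify its weight. For the first step, view $\alpha$ as an element of the fraction field of $R$ and, using that $R$ is a UFD, write $\alpha = a/s$ in lowest terms, i.e.\ with $\gcd(a,s) = 1$. For every $h \in R^\calX$, the containment $\alpha h \in R^G \subseteq R$ yields $s \mid a h$ in $R$, and coprimality of $a$ and $s$ then forces $s \mid h$. Applying this to each element of a finite subset $B \subseteq R^\calX$ with $\gcd(B) = 1$, we obtain $s \mid 1$, so $s \in R^\times = K^\times$ and hence $\alpha \in R$. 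For the second step, use the weight decomposition $R = \bigoplus_\calY R^\calY$ of Remark \ref{rem0}(1) to write $\alpha = \sum_\calY \alpha_\calY$ with $\alpha_\calY \in R^\calY$. Picking any nonzero $h \in R^\calX$ (which exists because $\gcd(R^\calX) = 1$ implies $R^\calX \neq 0$), we have $\alpha h = \sum_\calY \alpha_\calY h$ with each $\alpha_\calY h \in R^{\calY \calX}$, and since $\alpha h \in R^G$ has trivial weight, the uniqueness of the weight decomposition forces $\alpha_\calY h = 0$ whenever $\calY \ne \calX^{-1}$. Because $R$ is a domain and $h \ne 0$, each such $\alpha_\calY$ vanishes, leaving $\alpha = \alpha_{\calX^{-1}} \in R^{\calX^{-1}}$.

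The only real difficulty is the descent from $S^{-1} R$ to $R$ in the second step, and it crucially uses both the UFD structure of the polynomial ring $R$ and the gcd hypothesis on $R^\calX$; without the latter, elements of the form $1/s$ with $s$ dividing every element of $R^\calX$ would violate equality. Once $\alpha$ is known to lie in $R$, the weight-decomposition argument is purely formal, relying only on Remark \ref{rem0}(1) together with the fact that $R$ is a domain.
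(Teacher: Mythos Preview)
Your proof is correct and follows essentially the same approach as the paper: both use Remark~\ref{rem0}(4) for the first inclusion, then for the reverse inclusion write $\alpha$ in lowest terms in the fraction field of $R$, use the UFD property together with $\gcd(R^\calX)=1$ to force the denominator into $K^\times$, and finally deduce $\alpha\in R^{\calX^{-1}}$. The only difference is that the paper compresses the last step into the single line ``$aR^\calX\subseteq R^G$, thus $a\in R^{\calX^{-1}}$,'' whereas you spell out the weight-decomposition argument explicitly via Remark~\ref{rem0}(1); your version is more detailed but not substantively different.
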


\begin{proof}
The inclusion $R^G:_{S^{-1} R} R^\calX \supseteq R^{\calX^{-1}}$ follows from Remark \ref{rem0}(4). Suppose that $\gcd (R^\calX) =1$. Let $a/b\in R^G:_{S^{-1} R} R^\calX$, where $a, b\in R$. Thus, $(a/b) R^\calX \subseteq R^G$. We may assume that $\gcd(a,b)=1$ (note that we do not assume that $b\in S$). Since $a$ and $b$ are coprime, $b$ divides all elements in $R^\calX$. It follows that $b\in K\setminus \{0\}$ since $\gcd (R^\calX) =1$. Hence, $aR^\calX = ab^{-1} R^\calX \subseteq R^G$, thus $a\in R^{\calX^{-1}}$. This concludes that $a/b=ab^{-1}\in R^{\calX^{-1}}$.
\end{proof}

By Proposition \ref{p33}, it is natural to ask when $\gcd (R^\calX) =1$ holds. To consider this problem, we need the notion of pseudo-reflection. Recall that we assume that $G \subseteq \mathrm{GL}(K^{\oplus d})$.

\begin{defn} \label{d34} (\cite[before Theorem 6.4.10]{BH})
For an element $\sigma \in \mathrm{GL}(K^{\oplus d})$, $\sigma$ has a {\bf pseudo-reflection} if $\sigma$ has finite order, and its eigenspace for the eigenvalue $1$ has dimension $d-1$.
\end{defn}

\begin{lem}\label{l35} {\rm (cf. \cite[Section 2]{CS})}
For $1\le i \le \ell$, the following are equivalent. 
\begin{enumerate}[\rm(1)] 
\item A cyclic subgroup $\langle \sigma_i \rangle$ of $G$ has no pseudo-reflection.
\item $\gcd(t_{i\, j_1}, t_{i\, j_2}, \dots, t_{i\, j_{d-1}}, n_i) = 1$ for all $(d-1)$-tuples with distinct integers $j_1,\dots, j_{d-1}\in \{1,2,\dots, d\}$.  
\end{enumerate}
\end{lem}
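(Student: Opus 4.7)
The plan is to translate both conditions into concrete arithmetic statements about congruences modulo $n_i$ and then show their equivalence by standard $\gcd$ and Bézout arguments.

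First, I would observe that since $\sigma_i$ is the diagonal matrix with entries $\xi_i^{t_{ij}}$, an arbitrary power $\sigma_i^k$ is the diagonal matrix with entries $\xi_i^{kt_{ij}}$. Using the normalization $\gcd(t_{i1},\dots,t_{id},n_i)=1$, the order of $\sigma_i$ is exactly $n_i$, so the non-identity elements of $\langle\sigma_i\rangle$ correspond precisely to $k\in\{1,\dots,n_i-1\}$. Then $\sigma_i^k$ is a pseudo-reflection if and only if there is exactly one index $j_0\in\{1,\dots,d\}$ with $n_i\nmid kt_{ij_0}$, while $n_i\mid kt_{ij}$ for every $j\ne j_0$. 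This reformulation is the main setup step; after it, condition (1) becomes: for every $j_0$ and every $k\in\{1,\dots,n_i-1\}$, the congruences $n_i\mid kt_{ij}$ ($j\ne j_0$) force also $n_i\mid kt_{ij_0}$ (equivalently, force $n_i\mid k$, so $k$ is forbidden).

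For the direction $\lnot(2)\Rightarrow\lnot(1)$, I would suppose that for some $j_0$ the number $g:=\gcd\bigl(\{t_{ij}:j\ne j_0\}\cup\{n_i\}\bigr)$ is $>1$, and set $k:=n_i/g$. Then $1\le k\le n_i-1$, and $n_i=kg$ divides $kt_{ij}$ for every $j\ne j_0$. The key observation is that $g\nmid t_{ij_0}$: otherwise $g$ would divide every $t_{ij}$ and $n_i$, violating the standing assumption $\gcd(t_{i1},\dots,t_{id},n_i)=1$. Hence $n_i\nmid kt_{ij_0}$, so $\sigma_i^k$ is a pseudo-reflection in $\langle\sigma_i\rangle$, contradicting (1).

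For the converse $(2)\Rightarrow(1)$, I would argue by contradiction: if $\sigma_i^k$ ($1\le k\le n_i-1$) is a pseudo-reflection with exceptional index $j_0$, then $n_i\mid kt_{ij}$ for all $j\ne j_0$. Since $\gcd\bigl(\{t_{ij}:j\ne j_0\}\cup\{n_i\}\bigr)=1$ by (2), Bézout gives integers $\alpha$ and $\beta_j$ with $1=\alpha n_i+\sum_{j\ne j_0}\beta_j t_{ij}$. Multiplying by $k$ yields
\[
k=\alpha k n_i+\sum_{j\ne j_0}\beta_j(kt_{ij}),
\]
and each summand is divisible by $n_i$; so $n_i\mid k$, contradicting $1\le k\le n_i-1$.

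I expect no serious obstacle here; the only subtle point is verifying that the constructed $k=n_i/g$ in the first direction genuinely realizes a pseudo-reflection, which hinges on using the overall $\gcd$ hypothesis $\gcd(t_{i1},\dots,t_{id},n_i)=1$ to guarantee $g\nmid t_{ij_0}$. Once that is cleanly isolated, both implications reduce to one-line Bézout or divisibility arguments.
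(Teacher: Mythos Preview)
Your proof is correct and follows essentially the same approach as the paper's. Both arguments reformulate the pseudo-reflection condition in terms of the congruences $n_i\mid kt_{ij}$, construct the witness $k=n_i/g$ for $\lnot(2)\Rightarrow\lnot(1)$ using the standing hypothesis $\gcd(t_{i1},\dots,t_{id},n_i)=1$, and for the converse deduce $n_i\mid k$ from the congruences; the paper phrases this last step via the observation that $\gcd(kt_{i1},\dots,\widehat{kt_{ij_0}},\dots,kt_{id},n_i)$ divides $k\cdot g=k$, which is exactly your B\'ezout computation in disguise.
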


\begin{proof}
We prove the contrapositive of the assertion. Observe that 
\begin{align*}
& \text{$\langle \sigma_i \rangle$ has a pseudo-reflection} \\
\Leftrightarrow \quad & \text{there exists $1\le s <n_i$ such that $\sigma_i^s$'s eigenspace for the eigenvalue $1$ has dimension $d-1$}\\
\Leftrightarrow \quad & \text{there exist $1\le s <n_i$ and $1\le j\le d$ such that}\\
& \text{$st_{i\, j} \not\equiv 0 \mod n_i$ \quad and \quad $st_{i1}\equiv \cdots \equiv st_{i\, j-1} \equiv  st_{i\, j+1} \equiv \cdots \equiv st_{i\, d} \equiv 0 \mod n_i$.}
\end{align*}
Thus, it is enough to prove that the last assertion above is equivalent to saying that 
\begin{align}\label{eq351}
\gcd(t_{i1}, t_{i2}, \dots, t_{i\, j-1}, t_{i\, j+1}, \dots, t_{i\, d}, n_i) \ne 1
\end{align}
for some $1 \le j \le d$. Set $g:=\gcd(t_{i1}, t_{i2}, \dots, t_{i\, j-1}, t_{i\, j+1}, \dots, t_{i\, d}, n_i)$. If \eqref{eq351} holds true, then we can choose $n_i/g$ as $s$. Indeed, since we assume that $\gcd(t_{i1}, \dots, t_{id}, n_i) =1$ (see Setup \ref{setup1}), we have $\gcd(g, t_{ij}) = 1$. Therefore, since $t_{ij}/g$ is not an integer, $st_{ij} \not\equiv 0 \mod n_i$. The assertion that $st_{i1}\equiv \cdots \equiv st_{i\, j-1} \equiv  st_{i\, j+1} \equiv \cdots \equiv st_{i\, d} \equiv 0 \mod n_i$ follows since $t_{i1}/g, \dots, t_{i\, j-1}/g, t_{i\, j+1}/g, \dots, t_{id}/g$ are integers. 

Conversely, assume that \eqref{eq351} is not true, i.e., $g = 1$. Then, 
\[
\gcd(st_{i1}, st_{i2}, \dots, st_{i\, j-1}, st_{i\, j+1}, \dots, st_{i\, d}, n_i)
\] 
divides $sg = s$. Since $1\le s <n_i$, 
\[
st_{i1}\equiv \cdots \equiv st_{i\, j-1} \equiv  st_{i\, j+1} \equiv \cdots \equiv st_{i\, d} \equiv 0 \mod n_i
\] 
does not hold true. 
\end{proof}

\begin{cor}\label{c36}
If $G$ has no pseudo-reflection, then $\gcd(t_{i\, j_1}, t_{i\, j_2}, \dots, t_{i\, j_{d-1}}, n_i) = 1$ for all $1\le i \le \ell$ and $(d-1)$-tuples with distinct integers $j_1,\dots, j_{d-1}\in \{1,2,\dots, d\}$.  
\end{cor}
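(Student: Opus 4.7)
The plan is to deduce this corollary directly from Lemma \ref{l35}, since that lemma has already translated the absence of pseudo-reflections in a cyclic subgroup into exactly the arithmetic condition we want to establish. The only thing to check is that the hypothesis ``$G$ has no pseudo-reflection'' passes to each cyclic subgroup $\langle \sigma_i\rangle$.

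First I would fix an index $1\le i\le \ell$ and argue by contradiction (or contrapositive): suppose $\langle \sigma_i\rangle$ contains a pseudo-reflection. Since $\langle \sigma_i\rangle \subseteq G$, any pseudo-reflection in $\langle \sigma_i\rangle$ is a pseudo-reflection in $G$, contradicting the hypothesis. Hence $\langle \sigma_i\rangle$ has no pseudo-reflection.

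Now I would invoke Lemma \ref{l35} applied to the cyclic subgroup $\langle \sigma_i\rangle$: the equivalence $(1)\Leftrightarrow (2)$ there immediately yields
\[
\gcd(t_{i\,j_1}, t_{i\,j_2}, \dots, t_{i\,j_{d-1}}, n_i) = 1
\]
for every $(d-1)$-tuple of distinct integers $j_1,\dots,j_{d-1}\in\{1,2,\dots,d\}$. Since $i$ was arbitrary, this gives the claim.

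There is essentially no obstacle here; the nontrivial content has already been packaged into Lemma \ref{l35}, and the corollary is simply the observation that ``no pseudo-reflection'' is a hereditary property for subgroups, together with the fact that each generator $\sigma_i$ generates such a subgroup.
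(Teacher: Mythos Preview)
Your proposal is correct and matches the paper's own proof essentially verbatim: the paper also observes that since $G$ has no pseudo-reflection, neither does any cyclic subgroup $\langle \sigma_i\rangle$, and then applies Lemma~\ref{l35} to conclude.
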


\begin{proof}
Since $G$ has no pseudo-reflection, neither do all cyclic subgroups $\langle \sigma_i \rangle$ for all $i=1,\dots, \ell$. Applying Lemma \ref{l35} yields the assertion. 
\end{proof}

The following proposition is key to proving the main theorem. 

\begin{prop}\label{p37}
Let $a_{ij}, b_i, p_i$ be positive integers for $1\le i \le m$ and $1\le j \le n$. Consider the following simultaneous (congruence) equation: 
\begin{align}\label{eqstar}
\begin{cases}
a_{11} x_1 + a_{12} x_2 +\cdots +a_{1n} x_n &\equiv b_1 \mod p_1 \\
a_{21} x_1 + a_{22} x_2 +\cdots +a_{2n} x_n &\equiv b_2 \mod p_2 \\
&\vdots \\
a_{m1} x_1 + a_{m2} x_2 +\cdots +a_{mn} x_n &\equiv b_m \mod p_m
\end{cases}
\end{align}
If $p_1, \dots, p_m$ are pairwise coprime and $\gcd(a_{i1}, \dots, a_{in}, p_i) =1$ for all $1 \le i \le m$, then there exists a positive integer solution $x_1, \dots, x_n$ of \eqref{eqstar}.
\end{prop}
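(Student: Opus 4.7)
The plan is to reduce the problem to two standard number-theoretic tools: Bezout's identity (applied row by row) and the Chinese Remainder Theorem (applied column by column), followed by a cosmetic shift to enforce positivity.

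First, I would handle each congruence separately. For each fixed $i$, the hypothesis $\gcd(a_{i1}, \dots, a_{in}, p_i) = 1$ allows Bezout's identity to produce integers $c_{i1}, \dots, c_{in}, c_i$ with $c_{i1}a_{i1} + \cdots + c_{in}a_{in} + c_i p_i = 1$. Multiplying through by $b_i$ and setting $y_j^{(i)} := b_i c_{ij}$ gives integers satisfying
\[
a_{i1} y_1^{(i)} + a_{i2} y_2^{(i)} + \cdots + a_{in} y_n^{(i)} \equiv b_i \mod p_i,
\]
so the $i$-th congruence admits at least one integer solution.

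Next, I would glue these partial solutions together using the Chinese Remainder Theorem, applied to each coordinate $j$ separately. Since $p_1, \dots, p_m$ are pairwise coprime, for each $1 \le j \le n$ there exists an integer $x_j$ satisfying $x_j \equiv y_j^{(i)} \mod p_i$ for every $1 \le i \le m$. For this choice, each left-hand side of \eqref{eqstar} reduces modulo $p_i$ to $\sum_j a_{ij} y_j^{(i)} \equiv b_i$, so the whole system is solved simultaneously by $(x_1, \dots, x_n) \in \mathbb{Z}^n$.

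Finally, I would enforce positivity by translation. Setting $P := p_1 p_2 \cdots p_m$ and replacing each $x_j$ by $x_j + N_j P$ for sufficiently large positive integers $N_j$ leaves all the congruences intact (since each $p_i$ divides $P$) and makes every coordinate positive. This yields the desired positive integer solution. I do not anticipate a genuine obstacle here: the only point that deserves care is checking that the gcd hypothesis is precisely what is needed for Bezout to deliver a solution to each individual congruence, and that the pairwise coprimality of the $p_i$ is precisely what is needed for CRT to combine them into one coherent integer vector.
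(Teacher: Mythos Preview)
Your proof is correct, and it takes a genuinely different route from the paper. The paper argues by induction on the number of variables $n$: in the base case $n=1$ each congruence $a_{i1}x_1\equiv b_i\pmod{p_i}$ is solved using $\gcd(a_{i1},p_i)=1$ and the solutions are glued by CRT; in the inductive step one first solves a reduced system $a_{in}x_n\equiv b_i\pmod{\gcd(a_{i1},\dots,a_{i,n-1},p_i)}$ for $x_n$, then substitutes back, divides each congruence by $g_i=\gcd(a_{i1},\dots,a_{i,n-1},p_i)$, and applies the induction hypothesis to the remaining $n-1$ variables.

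Your argument is more direct: you solve each \emph{row} in one shot via Bezout (producing a full vector $(y_1^{(i)},\dots,y_n^{(i)})$ for each $i$), and then you run CRT \emph{columnwise} to assemble a single $(x_1,\dots,x_n)$. This avoids the induction and the auxiliary moduli $g_i$ entirely, at the cost only of the final positivity shift by multiples of $P=\prod p_i$, which the paper absorbs step by step instead. Your approach is shorter and more transparent; the paper's induction perhaps makes the role of the gcd hypothesis more visibly incremental, but it offers no extra generality.
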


\begin{proof}
While this assertion is likely known in some literature, we were unable to find a direct reference. Therefore, we include a proof for completeness. 

We prove by induction on $n$. Suppose that $n=1$, that is, 
\begin{align*}
\begin{cases}
a_{11} x_1  &\equiv b_1 \mod p_1 \\
a_{21} x_1  &\equiv b_2 \mod p_2 \\
&\vdots \\
a_{m1} x_1 &\equiv b_m \mod p_m
\end{cases}
\end{align*}
For each $1 \le i \le m$, since $\gcd(a_{i1}, p_i)=1$, there exists a positive integer $c_i$ such that $x_1 \equiv c_i \mod p_i$, satisfying the equation $a_{i1} x_1  \equiv b_i \mod p_i$. By the Chinese Remainder Theorem, there exists a positive integer $c$ such that $c \equiv c_i \mod p_i$ for all $1 \le i \le m$. Thus, $x=c$ is a solution of the above simultaneous equations. 

Suppose that $n>1$ and the assertion holds for all $n=1, \dots, n-1$. We consider the following simultaneous (congruence) equation
\begin{align*}
\begin{cases}
a_{1n} x_n &\equiv b_1 \mod \gcd(a_{11}, \dots, a_{1\, n-1}, p_1) \\
a_{2n} x_n &\equiv b_2 \mod \gcd(a_{21}, \dots, a_{2\, n-1}, p_2) \\
&\vdots \\
a_{mn} x_n &\equiv b_m \mod \gcd(a_{m1}, \dots, a_{m\, n-1}, p_m)
\end{cases}
\end{align*}
A solution $x=c_n$ of the above exists for some positive integer $c_n$ by the induction hypothesis. Next, consider the following simultaneous (congruence) equation
\begin{align}\label{eq372}
\begin{cases}
a_{11} x_1 + a_{12} x_2 +\cdots +a_{1\, n-1} x_{n-1} &\equiv b_1 - a_{1n} c_n \mod p_1 \\
a_{21} x_1 + a_{22} x_2 +\cdots +a_{2\, n-1} x_{n-1} &\equiv b_2 - a_{2n} c_n \mod p_2 \\
&\vdots \\
a_{m1} x_1 + a_{m2} x_2 +\cdots +a_{m\, n-1} x_{n-1} &\equiv b_m - a_{mn} c_n \mod p_m
\end{cases}
\end{align}
Note that for all $1 \le i \le m$, we have 
\begin{align*}
\gcd(a_{i1}, \dots, a_{i\, n-1}, b_i - a_{in} c_n, p_i) 
= & \gcd(\gcd(a_{i1}, \dots, a_{i\, n-1}, p_i), b_i - a_{in} c_n)\\
= & \gcd(a_{i1}, \dots, a_{i\, n-1}, p_i),
\end{align*}
where the second equality follows since $\gcd(a_{i1}, \dots, a_{i\, n-1}, p_i)$ divides $b_i - a_{in} c_n$ by the definition of $c_n$. Set $g_i=\gcd(a_{i1}, \dots, a_{i\, n-1}, p_i)$ for $1\le i \le m$. We then observe that \eqref{eq372} is equivalent to 
\begin{align*}
\begin{cases}
(a_{11}/g_1) x_1 + (a_{12}/g_1) x_2 +\cdots +(a_{1\, n-1}/g_1) x_{n-1} &\equiv (b_1 - a_{1n} c_n)/g_1 \mod p_1/g_1 \\
(a_{21}/g_2) x_1 + (a_{22}/g_2) x_2 +\cdots +(a_{2\, n-1}/g_2) x_{n-1} &\equiv (b_2 - a_{2n} c_n)/g_2 \mod p_2/g_2 \\
&\vdots \\
(a_{m1}/g_m) x_1 + (a_{m2}/g_m) x_2 +\cdots +(a_{m\, n-1}/g_m) x_{n-1} &\equiv (b_m - a_{mn} c_n)/g_m \mod p_m/g_m
\end{cases}
\end{align*}
Then, by the induction hypothesis, there exist positive integers $c_1, \dots, c_{n-1}$ such that $x_1=c_1, \dots, x_{n-1}=c_{n-1}$ is a solution of the above simultaneous (congruence) equation. Therefore, we obtain that $x_1=c_1, \dots, x_{n-1}=c_{n-1}, x_n=c_n$ is a solution of \eqref{eqstar}.
\end{proof}

Now we can prove the main theorem of this article.

\begin{thm}\label{t38}
Suppose that $n_1, \dots, n_\ell$ are pairwise coprime and $G$ has no pseudo-reflection. 
%\begin{enumerate}[\rm(a)] 
%\item $n_1, \dots, n_\ell$ are pairwise coprime and 
%\item $G$ has no pseudo-reflection.
%\end{enumerate}
Then, for all characters $\calX$, $R^\calX$ is nonzero and $\tr_{R^G} (R^\calX) = R^\calX R^{\calX^{-1}}$ holds. 
\end{thm}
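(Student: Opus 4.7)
\medskip

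\noindent\textbf{Proof proposal.}

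The plan is to reduce the statement, via the tools developed in Lemma~\ref{l32} and Proposition~\ref{p33}, to two concrete solvability questions for systems of congruences, both of which will be handled by Proposition~\ref{p37}. Throughout, write $\calX(\sigma_i) = \xi_i^{s_i}$ for $1 \le i \le \ell$. A monomial $X_1^{a_1} \cdots X_d^{a_d}$ lies in $R^\calX$ exactly when, for every $i$,
\[
t_{i1} a_1 + t_{i2} a_2 + \cdots + t_{id} a_d \equiv s_i \pmod{n_i},
\]
so describing $R^\calX$ becomes a purely number-theoretic problem.

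First I would show that $R^\calX \ne 0$. Applying Proposition~\ref{p37} with $a_{ij} = t_{ij}$, $b_i = s_i$, $p_i = n_i$, and $(m,n) = (\ell, d)$: the hypothesis that $n_1,\dots,n_\ell$ are pairwise coprime matches the coprimality assumption, and $\gcd(t_{i1}, \dots, t_{id}, n_i) = 1$ is part of Setup~\ref{setup1}. Hence positive integers $a_1, \dots, a_d$ solving the system exist, producing a nonzero monomial in $R^\calX$.

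The main step—and the place where the absence of pseudo-reflections enters—is verifying $\gcd(R^\calX) = 1$, which will let Proposition~\ref{p33} identify the colon ideal. Since $R^\calX$ is generated by monomials (Remark~\ref{rem0}(2)), it is enough to exhibit, for each $j \in \{1, \dots, d\}$, a monomial of $R^\calX$ not divisible by $X_j$. Fixing $j$ and forcing $a_j = 0$, I would apply Proposition~\ref{p37} to the reduced system
\[
\sum_{k \ne j} t_{ik}\, a_k \equiv s_i \pmod{n_i}, \qquad 1 \le i \le \ell,
\]
in the $d-1$ unknowns $\{a_k\}_{k \ne j}$. The hypothesis now needed is $\gcd(\{t_{ik}\}_{k \ne j}, n_i) = 1$ for every $i$, which is exactly the content of Corollary~\ref{c36} under the no-pseudo-reflection assumption. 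This yields a positive-integer solution, hence a monomial in $R^\calX$ in which $X_j$ does not appear. Running $j$ over $\{1, \dots, d\}$ gives $\gcd(R^\calX) = 1$.

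Finally, combining the pieces: Proposition~\ref{p33} upgrades to the equality $R^G :_{S^{-1}R} R^\calX = R^{\calX^{-1}}$, and Lemma~\ref{l32} then yields
\[
\tr_{R^G}(R^\calX) = (R^G :_{S^{-1}R} R^\calX)\, R^\calX = R^{\calX^{-1}} R^\calX,
\]
as required. The principal obstacle is the gcd step; once it is in place, everything else is bookkeeping. If one tried to weaken the hypothesis on pseudo-reflections, the monomial ideal generated by $R^\calX$ could acquire a common variable factor and the computation of the colon via Proposition~\ref{p33} would break down, so this hypothesis is not cosmetic but essential to the argument.
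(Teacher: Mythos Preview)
Your argument is correct and uses the same scaffolding as the paper: establish $\gcd(R^\calX)=1$ by producing, for each $j$, a monomial in $R^\calX$ not involving $X_j$, then feed this into Proposition~\ref{p33} and Lemma~\ref{l32}. The only difference is in how the monomials are found. The paper first solves the system with right-hand side $(1,\dots,1)$ to obtain such monomials in $R^{(1,\dots,1)}$, then takes $p$th powers to land in $R^{(p,\dots,p)}$, and finally uses the Chinese Remainder Theorem to identify every $R^\calX$ with some $R^{(p,\dots,p)}$. You instead apply Proposition~\ref{p37} directly with $b_i=s_i$, which is shorter and eliminates the CRT step. (Incidentally, your separate nonvanishing argument is redundant: the monomials produced in the $\gcd$ step already show $R^\calX\ne 0$.) Neither route gains generality over the other; yours is simply more direct.
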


\begin{proof}
We prove the following claim.
\begin{claim}\label{claim1}
For all $1\le j \le d$, there exist positive integers $c_1, \dots, c_{j-1}, c_{j+1}, \dots, c_d$ such that $X_1^{c_1}\cdots X_{j-1}^{c_{j-1}} X_{j+1}^{c_{j+1}} \cdots X_{d}^{c_{d}} \in R^{(1,1,\dots, 1)}$.
\end{claim}

\begin{proof}[Proof of Claim \ref{claim1}]
By the symmetry, it is enough to prove the case where $j=d$. Since $G$ has no pseudo-reflection, $\gcd (t_{i1}, \dots, t_{i\, d-1}, n_i) = 1$ for all $1\le i \le \ell$ by Corollary \ref{c36}. Since we assume that $n_1, \dots, n_\ell$ are pairwise coprime, by Proposition \ref{p37}, there exist positive integers $c_1, \dots, c_{d-1}$ satisfying the equations: 
\begin{align*}
\begin{cases}
t_{11} c_1 + t_{12} c_2 +\cdots +t_{1\, d-1} c_{d-1} &\equiv 1 \mod n_1 \\
t_{21} c_1 + t_{22} c_2 +\cdots +t_{2\, d-1} c_{d-1} &\equiv 1 \mod n_2 \\
&\vdots \\
t_{\ell 1} c_1 + t_{\ell 2} c_2 +\cdots +t_{\ell \, d-1} c_{d-1} &\equiv 1 \mod n_\ell
\end{cases}
\end{align*}
In other words, $\sigma_i(X_1^{c_1}X_2^{c_2} \cdots X_{d-1}^{c_{d-1}}) = \xi_i X_1^{c_1}X_2^{c_2} \cdots X_{d-1}^{c_{d-1}}$ for all $1\le i \le \ell$. This proves that 
\[
X_1^{c_1}X_2^{c_2} \cdots X_{d-1}^{c_{d-1}} \in R^{(1,1,\dots, 1)}
\]
as desired.
\end{proof}

By Claim \ref{claim1}, for all positive integers $p$ and all $1 \le j \le d$, $(X_1^{c_1}\cdots X_{j-1}^{c_{j-1}} X_{j+1}^{c_{j+1}} \cdots X_{d}^{c_{d}})^p \in R^{(p,p,\dots, p)}$. Since $\gcd (\{(X_1^{c_1}\cdots X_{j-1}^{c_{j-1}} X_{j+1}^{c_{j+1}} \cdots X_{d}^{c_{d}})^p \mid 1 \le j \le d \}) =1$, it follows that $\gcd (R^{(p,p,\dots, p)}) =1$. 
%Hence, $X_j$ does not divide $\gcd (R^{(p,p,\dots, p)})$ for all $1 \le j \le d$, that is, $\gcd (R^{(p,p,\dots, p)}) =1$. 
On the other hand, for each character $\calX$, $R^\calX = R^{(p,p,\dots, p)}$ for some $1 \le p \le n_1n_2\cdots n_\ell$ by the Chinese Remainder Theorem. Therefore, we have $\gcd (R^\calX) =1$ for all characters $\calX$. Thus, the assertion follows by Lemma \ref{l32} and Proposition \ref{p33}.
\end{proof}

\begin{cor}\label{c39}
Suppose that $G$ is cyclic and has no pseudo-reflection. Then, for all characters $\calX$, $R^\calX$ is nonzero and $\tr_{R^G} (R^\calX) = R^\calX R^{\calX^{-1}}$ holds. 
\end{cor}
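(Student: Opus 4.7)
The plan is to deduce this immediately from Theorem \ref{t38}. Since $G$ is cyclic, we may take $\ell = 1$, so that $G = \langle \sigma_1 \rangle$ is generated by a single element $\sigma_1$ of order $n_1$. In this situation, the hypothesis "$n_1, \dots, n_\ell$ are pairwise coprime" in Theorem \ref{t38} is vacuous, because there are no pairs $(n_i, n_j)$ with $i \ne j$ to check. The remaining hypothesis that $G$ has no pseudo-reflection is precisely what we have assumed.

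Therefore, all hypotheses of Theorem \ref{t38} are fulfilled, and the conclusion gives us both that $R^\calX \ne 0$ and that $\tr_{R^G}(R^\calX) = R^\calX R^{\calX^{-1}}$ for every character $\calX$ of $G$. There is no real obstacle; the corollary is merely the specialization $\ell = 1$ of the main theorem, recorded separately for convenience because the cyclic case is of independent interest and frequently appears in applications (for instance, when $G$ is contained in $\mathrm{SL}(K^d)$ and one studies the inverse determinant character in the setting of Corollary \ref{c41}).
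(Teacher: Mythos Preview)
Your proposal is correct and matches the paper's approach: the paper gives no separate proof of Corollary~\ref{c39}, treating it as the immediate specialization $\ell=1$ of Theorem~\ref{t38}, exactly as you observe. The only superfluous part is the final parenthetical remark about $\mathrm{SL}(K^d)$ and Corollary~\ref{c41}, which is commentary rather than proof and could be dropped.
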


The following example shows that the equation $\tr_{R^G} (R^\calX) = R^\calX R^{\calX^{-1}}$ does not hold if we remove the assumption that $n_1, \dots, n_\ell$ are pairwise coprime in Theorem \ref{t38}.

\begin{ex} \label{traceneq}
Let $\xi_1$ and $\xi_2$ be the $4$th primitive root of $1\in K$ and the $6$th primitive root of $1\in K$, respectively. Suppose that $G=\langle \sigma_1, \sigma_2 \rangle$, where 
$\sigma_1$ and $\sigma_2$ are $3\times 3$ matrix diagonalizing with $\xi_1, \xi_1, \xi_1$ and $\xi_2, \xi_2^2, \xi_2^3$. Then, the following hold true. 
\begin{enumerate}[\rm(i)] 
\item $R^\calX$ is nonzero for each character $\calX$.
\item $R^{(1,0)}$ is a canonical $R^G$-module and $\tr_{R^G} (R^{(1,0)}) \supsetneq R^{(1,0)} R^{(3,0)}$.
\end{enumerate}
\end{ex}

\begin{proof}
(i): It is straightforward to check that $X_1 X_2 X_3^{23} \in R^{(1,0)}$ and $X_2^{23} X_3 \in R^{(0,1)}$. Hence, $(X_1 X_2 X_3^{23})^s (X_2^{23} X_3)^t \in R^{(s,t)}$ for all non-negative integers $s, t$. It follows that for each character $\calX$, the semi-invariants of weight $\calX$ are nonzero. 

(ii): $R^{(1,0)}$ is a canonical $R^G$-module by \cite[Theorem 6.4.2(b)]{BH}. We have $\tr_{R^G} (R^{(1,0)}) \supseteq R^{(1,0)} R^{(3,0)}$ by Lemma \ref{l32} and Proposition \ref{p33}. Thus, we complete the proof by showing the following claim.
\begin{claim}\label{claim2}
The following hold true.
\begin{enumerate}[\rm(1)] 
\item $X_2$ divides all monomials in $R^{(1,0)}$. 
\item $X_2$ divides all monomials in $R^{(1,0)}R^{(3,0)}$.
\item There exists a monomial $f$ in $\tr_{R^G} (R^{(1,0)})$ such that $X_2$ does not divide $f$.
%\item $X_2$ divides $\gcd (R^{(1,0)})$. 
%\item $X_2$ divides $\gcd (R^{(1,0)}R^{(3,0)})$.
%\item $X_2$ does not divide $\gcd (\tr_{R^G} (R^{(1,0)}) )$.
\end{enumerate}
\end{claim}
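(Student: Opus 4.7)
The plan is to establish the three parts sequentially; parts (1) and (2) follow from a simple parity argument, while part (3) requires an honest construction.

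For part (1), the strategy is a direct computation. A monomial $X_1^{a_1}X_2^{a_2}X_3^{a_3}$ lies in $R^{(1,0)}$ exactly when $\sigma_1$ multiplies it by $\xi_1$ and $\sigma_2$ fixes it, which amounts to
\[
a_1+a_2+a_3 \equiv 1 \pmod{4} \qquad \text{and} \qquad a_1+2a_2+3a_3 \equiv 0 \pmod{6}.
\]
Setting $a_2=0$, reducing the second congruence modulo $2$ forces $a_1+a_3$ to be even, which contradicts the first (which forces $a_1+a_3$ to be odd). Hence $a_2 \geq 1$ for every monomial in $R^{(1,0)}$.

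For part (2), every element of $R^{(1,0)}R^{(3,0)}$ is an $R^G$-linear combination of products $fg$ with $f\in R^{(1,0)}$ and $g\in R^{(3,0)}$. Expanding into monomials, by part (1) every monomial of $f$ is divisible by $X_2$, so every monomial in the support of $fg$ is divisible by $X_2$, and the same holds for any sum.

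Part (3) is the heart of the matter. The plan is to apply Lemma \ref{l32}, which gives $\tr_{R^G}(R^{(1,0)}) = (R^G :_{S^{-1}R} R^{(1,0)})R^{(1,0)}$, and to produce an element of the colon from a Laurent monomial with a negative power of $X_2$. Since $X_2^{12} \in R^G$, the element $h := X_1^2 X_2^{-1} X_3^2$ lies in $S^{-1}R$. A direct computation shows $\sigma_1(h) = \xi_1^3 h$ and $\sigma_2(h)=h$, so $h$ transforms with character $\calX^{-1} = (3,0)$. By part (1), every monomial $m \in R^{(1,0)}$ is divisible by $X_2$, so the negative power of $X_2$ in $h$ is absorbed, $hm \in R$, and a character count shows $hm \in R^G$. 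Hence $h \in (R^G :_{S^{-1}R} R^{(1,0)})$, and applying $h$ to the particular monomial $X_1^4 X_2 \in R^{(1,0)}$ yields $X_1^6 X_3^2 \in \tr_{R^G}(R^{(1,0)})$, which is plainly not divisible by $X_2$.

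The hard part is identifying the right $h$: the same parity argument as in (1) shows that every honest monomial in $R^{(3,0)}$ is also divisible by $X_2$, so one cannot find a witness inside $R^{(1,0)}R^{(3,0)}$ itself. The key insight is that $(R^G :_{S^{-1}R} R^{(1,0)})$ strictly exceeds $R^{(3,0)}$: Laurent monomials with a controlled negative power of $X_2$ are admissible, because the missing positive power of $X_2$ is automatically supplied by every monomial of $R^{(1,0)}$ via part (1).
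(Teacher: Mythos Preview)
Your proof is correct and follows essentially the same approach as the paper: the parity argument for (1), the trivial deduction for (2), and for (3) the construction of a Laurent monomial in $S^{-1}R$ with a single negative power of $X_2$ that transforms with character $(3,0)$, verified to lie in the colon via part (1), then multiplied by a specific monomial of $R^{(1,0)}$. Your particular choices $h = X_1^2 X_2^{-1} X_3^2$ and $X_1^4 X_2 \in R^{(1,0)}$ (yielding $X_1^6 X_3^2$) are in fact smaller and tidier than the paper's $\alpha = X_1^{11}X_2^{-1}X_3$ and $X_1 X_2 X_3^{23}$ (yielding $X_1^{12}X_3^{24}$), but the underlying idea is identical.
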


\begin{proof}[Proof of Claim \ref{claim2}]
(1): Suppose the contrary. Then, there exist positive integers $a, b$ such that $X_1^aX_3^b\in R^{(1,0)}$. This is equivalent to saying that 
%Since $R^{(1,0)}$ is generated by monomials (Remark \ref{rem0}(2)), there exist positive integers $a, b$ such that $X_1^aX_3^b\in R^{(1,0)}$. This is equivalent to saying that 
\begin{align*}
\begin{cases}
a+b &\equiv 1 \mod 4\\
a+3b&\equiv 0 \mod 6
\end{cases}
\end{align*}
This implies that $2$ divides $(a+3b)-(a+b-1) = 2b+1$, which is a contradiction. Hence, $X_2$ divides all monomials in $R^{(1,0)}$. 

(2): This follows from Claim \ref{claim2}(1). 

(3): Note that $\frac{X_{1}^{11}X_3}{X_2} \in S^{-1} R$, where $S=R^G\setminus \{0\}$ is a multiplicative closed subset of $R^G$, and set $\alpha = \frac{X_{1}^{11}X_3}{X_2}$. Then $\alpha R^{(1,0)} \subseteq R$ by Claim \ref{claim2}(1). 
It is straightforward to check that 
\[
\sigma_1(\alpha) = \sigma_1(X_{1}^{11}X_3)/\sigma_1(X_{1}^{11}X_3) = \xi_1^3 \alpha \quad \text{and} \quad \sigma_2(\alpha) = \sigma_2(X_{1}^{11}X_3)/\sigma_2(X_{1}^{11}X_3) = \alpha.
\]
It follows that $\alpha \in R^G:_{S^{-1} R} R^{(1,0)}$. On the other hand, one can check that $X_1 X_2 X_3^{23} \in R^{(1,0)}$. Therefore, we obtain that 
\[
X_1^{12} X_3^{24} = \alpha X_1 X_2 X_3^{23} \in  (R^G:_{S^{-1}R} R^{(1,0)}) R^{(1,0)} = \tr_{R^G}(R^{(1,0)})
\]
by Lemma \ref{l32}. Hence, we get $f=X_1^{12} X_3^{24}$ as desired. 
%This shows that $X_2$ does not divide $\gcd (\tr_{R^G} (R^{(1,0)}) )$.
\end{proof}

By Claim \ref{claim2}(2) and (3), $\tr_{R^G} (R^{(1,0)}) \ne R^{(1,0)} R^{(3,0)}$; hence, we conclude the latter assertion in Example~\ref{traceneq}(ii).
\end{proof}

The following provides a criterion for semi-invariants to be locally free on the graded punctured spectrum. We say that for $V\subseteq \Spec R^{G}$, $R^\calX$ is {\bf locally free on $V$} if $R^\calX_\fkp$ is $R^G_\fkp$-free for all $\fkp\in V$. 

%The following is a criterion for semi-invariants to be locally free on the graded punctured spectrum. 

\begin{thm}\label{t311}
Let $n=\prod_{i=1}^\ell n_i$. 
%Let $\fkm_G$ denote the graded maximal ideal of $R^G$. 
For each character $\calX$, consider the following conditions.
\begin{enumerate}[\rm(1)] 
\item $R^\calX$ is locally free on ${}^*\Spec R^G\setminus \{\fkm_G\}$.
%$R^\calX_\fkp$ is $R^G_\fkp$-free for all $\fkp\in {}^*\Spec R^G\setminus \{\fkm_G\}$.
\item $(X_1^n, \dots, X_d^n) \subseteq \tr_{R^G} (R^\calX)$.
\item For all $1 \le j \le d$, there exists $0<u_j\le n$ such that $X_j^{u_j}\in R^\calX$.
\end{enumerate}
Then {\rm (3) $\Rightarrow$ (2) $\Rightarrow$ (1)} holds. {\rm (1) $\Rightarrow$ (3)} also holds if $n_1, \dots, n_\ell$ are pairwise coprime and $G$ has no pseudo-reflection. 
\end{thm}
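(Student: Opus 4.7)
The plan is to establish the unconditional chain $(3) \Rightarrow (2) \Rightarrow (1)$ first, then to deduce $(1) \Rightarrow (3)$ from Theorem~\ref{t38} under the extra hypotheses on $n_1,\dots,n_\ell$ and on pseudo-reflections.

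For $(3) \Rightarrow (2)$ I will invoke the unconditional inclusion $R^\calX R^{\calX^{-1}} \subseteq \tr_{R^G}(R^\calX)$, which is immediate from Lemma~\ref{l32} combined with the first assertion of Proposition~\ref{p33}. Writing $\calX(\sigma_i) = \xi_i^{s_i}$, the hypothesis $X_j^{u_j} \in R^\calX$ amounts to the congruence $t_{ij} u_j \equiv s_i \pmod{n_i}$ for every $i$; since $n_i \mid n$, this forces $X_j^{n - u_j} \in R^{\calX^{-1}}$, so $X_j^n = X_j^{u_j} \cdot X_j^{n - u_j}$ lies in $R^\calX R^{\calX^{-1}} \subseteq \tr_{R^G}(R^\calX)$. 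For $(2) \Rightarrow (1)$, the key point is that no graded prime $\fkp \ne \fkm_G$ can contain $(X_1^n,\dots,X_d^n)$: using that $R$ is integral over $R^G$, any prime $\fkq \in \Spec R$ lying over $\fkp$ would contain each $X_i^n$ and hence each $X_i$, forcing $\fkq = (X_1,\dots,X_d)$ and $\fkp = \fkm_G$. Combined with (2), this gives $\fkp \not\supseteq \tr_{R^G}(R^\calX)$, so Remark~\ref{f22}(4) supplies an $R^G_\fkp$-free summand of $R^\calX_\fkp$; since $R^\calX$ is torsion-free of rank one over the domain $R^G$ by Remark~\ref{rem0}(5), this summand is all of $R^\calX_\fkp$.

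The main step is $(1) \Rightarrow (3)$, which uses Theorem~\ref{t38} to replace $\tr_{R^G}(R^\calX)$ by $R^\calX R^{\calX^{-1}}$. By Remark~\ref{rem0}(2) and (4), the latter is a monomial $R^G$-ideal whose monomial generators are precisely the products $AB$ of a monomial $A$ of $R^\calX$ and a monomial $B$ of $R^{\calX^{-1}}$. For each $1 \le j \le d$, I will consider the graded prime
\[
\fkp_j \; := \; (X_1,\dots,X_{j-1},X_{j+1},\dots,X_d)R \cap R^G,
\]
which is distinct from $\fkm_G$ since $X_j^n \in R^G \setminus \fkp_j$. Hypothesis (1), together with Remark~\ref{f22}(4) and the torsion-free rank-one argument above, gives $R^\calX R^{\calX^{-1}} \not\subseteq \fkp_j$, so some monomial generator $AB$ avoids $\fkp_j$. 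The only monomials of $R^G$ lying outside $\fkp_j$ are pure powers of $X_j$, forcing $A = X_j^u$ with $X_j^u \in R^\calX$; reducing $u$ modulo $n$ (valid since $X_j^n \in R^G$ and every $n_i$ divides $n$, with the borderline case $u = 0$ corresponding to the trivial character and handled by taking $u_j = n$) then produces the required $u_j$.

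The step I expect to demand the most care is the last one: one must verify that $R^\calX R^{\calX^{-1}}$ genuinely is a monomial $R^G$-ideal with the stated generating set (so that a failure of containment in $\fkp_j$ really yields a pure power of $X_j$), and that ``having a free summand'' upgrades to ``being free'' on the relevant localizations of the rank-one torsion-free module $R^\calX$.
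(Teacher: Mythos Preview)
Your proposal is correct and follows essentially the same approach as the paper. The only cosmetic difference is in $(1)\Rightarrow(3)$: the paper argues that $\tr_{R^G}(R^\calX)$ is $\fkm_G$-primary and hence contains some $X_j^{v_j}$ directly, whereas you reach the same conclusion by exhibiting the specific graded primes $\fkp_j=(X_1,\dots,\widehat{X_j},\dots,X_d)R\cap R^G$ and showing a monomial generator of $R^\calX R^{\calX^{-1}}$ avoids $\fkp_j$; both routes land on the same monomial factorization $X_j^{u_j}\in R^\calX$.
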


\begin{proof}
(3) $\Rightarrow$ (2): Note that $X_j^n\in R^G$ for all $1 \le j \le d$. Therefore, since $X_j^{u_j}\in R^\calX$, we have $X_j^{n-u_j} \in R^{\calX^{-1}}$. Hence, by Lemma \ref{l32} and Proposition \ref{p33}, we observe $X_j^n = X_j^{n-u_j}X_j^{u_j} \in \tr_{R^G} (R^\calX)$ for all $1 \le j \le d.$

(2) $\Rightarrow$ (1): Since $(X_1^n, \dots, X_d^n)$ is an $\fkm_G$-primary ideal of $R^G$, the assertion (2) implies that $R^\calX_\fkp$ has an $R^G_\fkp$-free summand for all $\fkp\in {}^*\Spec R^G\setminus \{\fkm_G\}$ (Remark \ref{f22}(4)). Since $R^\calX$ is a torsion-free $R^G$-module of rank $1$ (Remark \ref{rem0}(5)), it follows that $R^\calX_\fkp$ is an $R^G_\fkp$-free module (of rank $1$).

(1) $\Rightarrow$ (3): By the assumption (1), $\tr_{R^G} (R^\calX)$ is an $\fkm_G$-primary ideal of $R^G$ (Remark \ref{f22}(4)). Hence, for all $1\le j \le d$, there exists a positive integer $v_j$ such that $X_j^{v_j}\in \tr_{R^G} (R^\calX)$. By Theorem \ref{t38}, it follows that $X_j^{v_j}\in R^\calX R^{\calX^{-1}}$. Since $R^\calX \subseteq R$ and $R^{\calX^{-1}} \subseteq R$, there exists $0<u_j \le v_j$ such that $X_j^{u_j} \in R^\calX$. Since $X_j^n\in R^G$, by considering $u_j$ modulo $n$, we can replace $u_j$ to satisfy $0 < u_j \le n$. 
\end{proof}

\begin{rem}
The condition (3) in Theorem \ref{t311} can be checked by a simple calculation. Indeed, letting $R^\calX = R^{(s_1, \dots, s_\ell)}$, the condition (3) in Theorem \ref{t311} is equivalent to stating that for all $1 \le j \le d$, there exists $0 < u_j \le n$ satisfying the following simultaneous (congruence) equation:
\begin{align*}
\begin{cases}
u_j t_{1j} &\equiv s_1 \pmod{n_1}\\
u_j t_{2j} &\equiv s_2 \pmod{n_2}\\
&\vdots\\
u_j t_{\ell j} &\equiv s_\ell \pmod{n_\ell}
\end{cases}
\end{align*}
\end{rem}

\begin{cor}\label{c313}
Suppose that $n_1, \dots, n_\ell$ are pairwise coprime, and $G$ has no pseudo-reflection. Set $n = \prod_{i=1}^\ell n_i$. 
Then the following are equivalent.
\begin{enumerate}[\rm(1)] 
\item For each character $\calX$, $R^\calX$ is locally free on ${}^*\Spec R^G\setminus \{\fkm_G\}$. 
\item For all $1 \le j \le d$, $X_j^{1}, X_j^2, \dots, X_j^{n}$ are in different semi-invariants. 
%\item $R^{(1,1,\dots, 1)}$ is locally free for all $\fkp\in {}^*\Spec R^G\setminus \{\fkm_G\}$. 
\end{enumerate}
\end{cor}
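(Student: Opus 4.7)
The plan is to reduce Corollary~\ref{c313} to Theorem~\ref{t311} by a pure counting argument on characters. Under the standing assumptions (pairwise coprime $n_i$ and no pseudo-reflection), Theorem~\ref{t311} already gives, for each individual character $\calX$, the equivalence between $R^\calX$ being locally free on ${}^*\Spec R^G\setminus\{\fkm_G\}$ and the existence, for every $1\le j\le d$, of some $0<u_j\le n$ with $X_j^{u_j}\in R^\calX$. So condition (1) of the corollary says precisely that this latter condition holds \emph{simultaneously for every character}, and the task becomes reinterpreting this as condition (2).

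The first step I would take is to identify, for each $j$ and each $k$, which semi-invariant contains $X_j^k$. From $\sigma_i(X_j^k)=\xi_i^{k\,t_{ij}}X_j^k$, we read off that $X_j^k \in R^{(k t_{1j}\bmod n_1,\,\ldots,\,k t_{\ell j}\bmod n_\ell)}$. For fixed $j$, this defines a map $\Phi_j\colon \{1,\dots,n\}\to \widehat{G}$ sending $k$ to the character of $X_j^k$, where $\widehat{G}$ denotes the character group of $G$. In these terms, (2) is the statement that $\Phi_j$ is injective for every $j$, while (1), via Theorem~\ref{t311}, is the statement that $\Phi_j$ is surjective for every $j$.

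The key step is therefore the cardinality identity $|\widehat{G}|=n$, which turns injectivity and surjectivity of $\Phi_j$ into equivalent conditions. This will follow once I establish $|G|=n$: since $\gcd(t_{i1},\dots,t_{id},n_i)=1$, each $\sigma_i$ has order exactly $n_i$, and then pairwise coprimality of the $n_i$ together with abelianness of $G$ yields $G\cong \prod_{i=1}^{\ell}\mathbb{Z}/n_i\mathbb{Z}$, so $|G|=n$. Because $K$ is algebraically closed and $|G|$ is invertible in $K$, we then get $|\widehat{G}|=|G|=n$.

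With this in place, both implications will be immediate: (1)~$\Rightarrow$~(2) because a surjection between equinumerous finite sets is injective, and (2)~$\Rightarrow$~(1) because an injection between equinumerous finite sets is surjective, after which Theorem~\ref{t311}(3)~$\Rightarrow$~(1) applied to each character $\calX$ delivers the local freeness on the punctured spectrum. The main point I would want to check carefully is the identity $|G|=n$; without pairwise coprimality the order of $G$ can be strictly smaller than $\prod n_i$, so this is precisely where that hypothesis of the corollary is being used.
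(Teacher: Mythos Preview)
Your proof is correct and follows essentially the same route as the paper: both reduce to Theorem~\ref{t311} and then observe that, since the number of characters equals $n$, the map $k\mapsto$ (character of $X_j^k$) from $\{1,\dots,n\}$ to $\widehat{G}$ is injective if and only if it is surjective. Your argument is in fact a bit more careful than the paper's in justifying $|\widehat{G}|=n$, spelling out why pairwise coprimality of the $n_i$ and $\gcd(t_{i1},\dots,t_{id},n_i)=1$ force $|G|=n$.
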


\begin{proof}
Note that the number of all characters is $n$ (see before Remark \ref{rem0}). Thus, for all $1 \le j \le d$, $X_j^{1}, X_j^2, \dots, X_j^{n}$ are in different semi-invariants if and only if for each character $\calX$, there exists $0 < u_j \le n$ such that $X_j^{u_j}\in R^\calX$. The latter is equivalent to stating that all semi-invariants $R^\calX$ are locally free on ${}^*\Spec R^G\setminus \{\fkm_G\}$ by Theorem \ref{t311}. 
\end{proof}

\begin{cor}\label{c314}
Suppose that $G$ is cyclic (i.e., $\ell=1$) and has no pseudo-reflection. %Set $n=\prod_{i=1}^\ell n_i$. 
Then the following are equivalent.
\begin{enumerate}[\rm(1)] 
\item For each character $\calX$, $R^\calX$ is locally free on ${}^*\Spec R^G\setminus \{\fkm_G\}$. 
\item For all $1 \le j \le d$, $\gcd(t_{1j}, n_1) =1$. 
%\item $R^{(1,1,\dots, 1)}$ is locally free for all $\fkp\in {}^*\Spec R^G\setminus \{\fkm_G\}$. 
\end{enumerate}
\end{cor}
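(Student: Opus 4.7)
The plan is to deduce this cyclic case directly from Corollary \ref{c313} by translating the condition on $X_j^1, X_j^2, \ldots, X_j^{n_1}$ lying in distinct semi-invariants into a gcd condition on $t_{1j}$ and $n_1$. Since $\ell = 1$, the assumption that $n_1, \ldots, n_\ell$ are pairwise coprime is vacuous, so Corollary \ref{c313} applies with $n = n_1$. It therefore suffices to show that condition (2) of the present corollary is equivalent to the statement that, for every $1 \le j \le d$, the monomials $X_j^1, X_j^2, \ldots, X_j^{n_1}$ belong to pairwise distinct semi-invariants.

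Here is the key computation. Since $\sigma_1(X_j) = \xi_1^{t_{1j}} X_j$, we have $\sigma_1(X_j^u) = \xi_1^{u\, t_{1j}} X_j^u$, so $X_j^u \in R^{(s)}$ with $s$ determined by $s \equiv u\, t_{1j} \pmod{n_1}$. Hence $X_j^{u_1}$ and $X_j^{u_2}$ lie in the same semi-invariant if and only if $u_1 t_{1j} \equiv u_2 t_{1j} \pmod{n_1}$. Consequently, $X_j^1, X_j^2, \ldots, X_j^{n_1}$ lie in pairwise distinct semi-invariants precisely when the map
\[
\{1, 2, \ldots, n_1\} \to \mathbb{Z}/n_1\mathbb{Z}, \qquad u \mapsto u\, t_{1j} \bmod n_1,
\]
is injective, which by a standard fact about multiplication in $\mathbb{Z}/n_1\mathbb{Z}$ is equivalent to $\gcd(t_{1j}, n_1) = 1$.

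Putting these together, condition (1) holds if and only if, for every $1 \le j \le d$, the above injectivity holds, if and only if $\gcd(t_{1j}, n_1) = 1$ for every $j$, which is condition (2). There is no real obstacle here; the only thing to be mindful of is that Corollary \ref{c313} requires the no-pseudo-reflection hypothesis on $G$, which is part of the assumption of the present corollary, and that the pairwise coprimality hypothesis is vacuous in the cyclic case.
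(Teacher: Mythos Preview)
Your proof is correct and follows essentially the same approach as the paper: both reduce to Corollary~\ref{c313}, observe that $X_j^u$ lies in $R^{(s)}$ with $s \equiv u\,t_{1j} \pmod{n_1}$, and conclude that the distinctness condition amounts to $t_{1j}, 2t_{1j}, \ldots, n_1 t_{1j}$ being distinct modulo $n_1$, i.e., $\gcd(t_{1j}, n_1)=1$. Your write-up simply makes the computation more explicit than the paper's terse version.
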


\begin{proof}
Since $G$ is cyclic, the condition of Corollary \ref{c313}(2) is equivalent to stating that for all $1 \le j \le d$, the integers $t_{1j}, 2t_{1j},\dots, n_1t_{1j}$ are different modulo $n_1$. This is equivalent to the condition (2) of this assertion. 
\end{proof}

%%%%%%%%%%%%%%%%%%%%%%%%%%%%%%%%%%%%%%%%%%%%%%%%%%%%%%%%%%%%
\section{Proximity to Gorenstein Properties}\label{section4}

%\section{Properties close to being Gorenstein}\label{section4}

In this section, we explore properties that are close to being Gorenstein under Setup \ref{setup1} by computing the trace of the canonical module.

We first apply Theorem \ref{t311} to examine the graded non-Gorenstein locus of $R^G$. 
In a more general context, let $A$ be a Cohen-Macaulay ring with unique graded maximal ideal. Assuming the existence of a graded canonical module $\omega_A$ for $A$, it is established that $\tr_A(\omega_A)$ defines the \textbf{graded non-Gorenstein locus}, denoted by
\[
\{ \fkp\in {}^*\Spec A \mid \fkp\supseteq \tr_A(\omega_A) \} = \{ \fkp\in {}^*\Spec A \mid A_\fkp \text{ is not Gorenstein}, \}
\]
where ${}^*\Spec A$ denotes the set of graded prime ideals of $A$ (cf. Remark \ref{f22}(4)). We say that $A$ is {\bf Gorenstein on $V$} for $V\subseteq \Spec A$ if $A_\fkp$ is Gorenstein for all $\fkp\in V$. 
%(for example, there is a proof in \cite[Lemma 2.1]{HHS} for a local case. The graded local case can be also proved by a similar way.). 

On the other hand, for rings $R^G$ of invariants, it is known that the inverse determinant character describes a canonical $R^G$-module. Here, the group homomorphism
\[
\mathrm{det}^{-1}: G \to \mathrm{GL}(K); \sigma \mapsto \det(\sigma)^{-1}
\] 
is referred as the {\bf inverse determinant character}. We also define $\det: G \to \mathrm{GL}(K); \sigma \mapsto \det(\sigma)$ for convenience. By \cite[Theorem 6.4.2(b)]{BH}, it is known that $\omega_{R^G} \cong R^{\det^{-1}}(-d)$ as graded $R^G$-modules. 
Therefore, by applying Theorem \ref{t311} with $\calX=\det^{-1}$ and $\calX=\det$, we obtain the following. (Note that $\tr_{R^G} (R^\calX) = R^\calX R^{\calX^{-1}} = \tr_{R^G} (R^{\calX^{-1}})$ under the assumption of Theorem \ref{t38}.)

\begin{cor}\label{c41}
    Suppose that $n_1, \dots, n_\ell$ are pairwise coprime, and $G$ has no pseudo-reflection. Set $n=\prod_{i=1}^\ell n_i$. Then the following are equivalent.
    \begin{enumerate}[\rm(1)] 
        \item $R^G$ is Gorenstein on ${}^*\Spec R^G\setminus \{\fkm_G\}$.
        \item For all $1 \le j \le d$, there exists $0<u_j\le n$ such that $X_j^{u_j}\in R^{\det^{-1}}$.
        \item For all $1 \le j \le d$, there exists $0<u_j\le n$ such that $X_j^{u_j}\in R^{\det}$.
    \end{enumerate}
\end{cor}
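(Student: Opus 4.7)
The plan is to derive this corollary as a direct consequence of Theorem~\ref{t311}, applied successively to the characters $\det^{-1}$ and $\det$, together with the description of the canonical module $\omega_{R^G} \cong R^{\det^{-1}}(-d)$ recalled before the statement.

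For the equivalence $(1) \Leftrightarrow (2)$, I would argue as follows. Since $R^G$ is Cohen--Macaulay by Theorem~\ref{t11}(1) and admits the graded canonical module $R^{\det^{-1}}(-d)$, the graded version of the principle that $\tr_A(\omega_A)$ cuts out the non-Gorenstein locus (recalled at the start of Section~\ref{section4}) identifies the graded non-Gorenstein locus of $R^G$ with $V(\tr_{R^G}(R^{\det^{-1}}))$. Hence $R^G$ is Gorenstein on ${}^*\Spec R^G \setminus \{\fkm_G\}$ if and only if $\tr_{R^G}(R^{\det^{-1}})$ is $\fkm_G$-primary, which by Remark~\ref{f22}(4) amounts to saying that $R^{\det^{-1}}$ is locally free on the graded punctured spectrum. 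Applying the equivalence $(1)\Leftrightarrow (3)$ of Theorem~\ref{t311} to $\calX = \det^{-1}$ (valid since $n_1,\dots,n_\ell$ are pairwise coprime and $G$ has no pseudo-reflection) converts this into the monomial condition in (2).

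For $(2) \Leftrightarrow (3)$, I would invoke the symmetric trace formula provided by Theorem~\ref{t38}. Under our hypotheses, it yields
\[
\tr_{R^G}(R^{\det^{-1}}) \;=\; R^{\det^{-1}} R^{\det} \;=\; \tr_{R^G}(R^{\det}).
\]
Since these two traces coincide, the criterion $(1)\Leftrightarrow(2)$ of Theorem~\ref{t311} shows that $R^{\det^{-1}}$ is locally free on ${}^*\Spec R^G \setminus \{\fkm_G\}$ if and only if $R^{\det}$ is. A second application of Theorem~\ref{t311}, now to $\calX = \det$, translates this local freeness into the monomial condition (3), completing the cycle.

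I do not expect any substantial obstacle: both the trace-theoretic criterion (Theorem~\ref{t311}) and the symmetric formula (Theorem~\ref{t38}) are already established, and the corollary is essentially a packaging of these results via the canonical module. The only mild point to verify is that the ``trace of $\omega$ detects the non-Gorenstein locus'' principle is being used in its graded form, which is legitimate because $R^G$ is a positively graded $K$-algebra with unique graded maximal ideal $\fkm_G$ and with a graded canonical module.
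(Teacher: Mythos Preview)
Your proposal is correct and follows essentially the same approach as the paper: apply Theorem~\ref{t311} to $\calX=\det^{-1}$ and $\calX=\det$, and bridge the two via the symmetric trace formula $\tr_{R^G}(R^{\det^{-1}})=R^{\det^{-1}}R^{\det}=\tr_{R^G}(R^{\det})$ from Theorem~\ref{t38}. The paper compresses this into a single sentence preceding the statement, while you spell out the intermediate step that Gorenstein on the punctured spectrum is equivalent to local freeness of the canonical module $R^{\det^{-1}}$ there; one small omission is that Remark~\ref{f22}(4) only gives a free \emph{summand}, and you should cite Remark~\ref{rem0}(5) (rank~$1$, torsion-free) to upgrade this to actual freeness, as done in the proof of Theorem~\ref{t311}.
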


We further consider the nearly Gorenstein property of $R^G$. Below we recall the definition of nearly Gorenstein rings.

\begin{defn} (\cite[Definition 2.2]{HHS})
    Let $A$ be a Cohen-Macaulay local ring or a positively graded $K$-algebra over a field $K$. Set $\fkm_A$ as the maximal ideal of $A$ or the graded maximal ideal of $A$. Suppose that $A$ admits a canonical module $\omega_A$. Then $A$ is called \textbf{nearly Gorenstein} if $\tr_A(\omega_A) \supseteq \fkm_A$.
\end{defn}

By Theorem \ref{t38}, we immediately get the following.

\begin{cor}\label{c43}
    Suppose that $n_1, \dots, n_\ell$ are pairwise coprime and $G$ has no pseudo-reflection. 
    %\begin{enumerate}[\rm(a)] 
    %\item $n_1, \dots, n_\ell$ are pairwise coprime and 
    %\item $G$ has no pseudo-reflection.
    %\end{enumerate}
    Then the following are equivalent.
    \begin{enumerate}[\rm(1)] 
        \item $R^G$ is nearly Gorenstein.
        \item $R^{\det} R^{\det^{-1}}\supseteq \fkm_G$.
        \item For each monomial $f$ generating $\fkm_G$, there exists a monomial $g\in R^{\det}$ such that $g$ divides $f$.
    \end{enumerate}
    %$R^G$ is nearly Gorenstein if and only if $R^{\det} R^{\det^{-1}}\supseteq \fkm_G$.
\end{cor}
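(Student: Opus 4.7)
The plan is to deduce the corollary directly from Theorem~\ref{t38} applied to the inverse determinant character $\calX = \det^{-1}$, exploiting the identification $\omega_{R^G} \cong R^{\det^{-1}}(-d)$ recalled before Corollary~\ref{c41}.

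For (1)~$\Leftrightarrow$~(2), I would unpack the definition: $R^G$ is nearly Gorenstein iff $\tr_{R^G}(\omega_{R^G}) \supseteq \fkm_G$. Since a degree shift does not affect the trace ideal, $\tr_{R^G}(\omega_{R^G}) = \tr_{R^G}(R^{\det^{-1}})$. Then Theorem~\ref{t38} (whose hypotheses are in force) computes this trace as $R^{\det^{-1}} R^{\det} = R^{\det} R^{\det^{-1}}$, yielding (2).

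For (2)~$\Leftrightarrow$~(3), the key observation is that both $\fkm_G$ and $R^{\det} R^{\det^{-1}}$ are ideals of $R^G$ generated by monomials (via Remark~\ref{rem0}(2)), so the inclusion $\fkm_G \subseteq R^{\det} R^{\det^{-1}}$ may be tested on a monomial generating set of $\fkm_G$. Given such a monomial $f$, if (3) supplies a monomial $g \in R^{\det}$ dividing $f$, then $h := f/g$ is a monomial of $R$ satisfying $\sigma(h) = \sigma(f)/\sigma(g) = f/(\det(\sigma)\,g) = \det(\sigma)^{-1} h$ for every $\sigma \in G$, so $h \in R^{\det^{-1}}$ and $f = g h \in R^{\det} R^{\det^{-1}}$. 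Conversely, if $f$ lies in the monomial ideal $R^{\det} R^{\det^{-1}}$, some product $g' h'$ of monomials $g' \in R^{\det}$ and $h' \in R^{\det^{-1}}$ divides $f$ in $R$, and in particular the monomial $g' \in R^{\det}$ divides $f$, giving (3).

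The main obstacle is essentially bookkeeping rather than substantial mathematics, since Theorem~\ref{t38} already does the heavy lifting. The subtlety, such as it is, lies in the monomial-ideal argument for (2)~$\Leftrightarrow$~(3): one must verify that $R^{\det} R^{\det^{-1}}$ is genuinely generated by monomials as an $R^G$-ideal, and that divisibility in $R^G$ between monomials reduces to divisibility in $R$ together with invariance of the quotient — both of which follow cleanly from Remark~\ref{rem0}.
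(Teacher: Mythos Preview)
Your proposal is correct and follows the same route as the paper: (1)$\Leftrightarrow$(2) is deduced from Theorem~\ref{t38} via $\omega_{R^G}\cong R^{\det^{-1}}(-d)$, and (2)$\Leftrightarrow$(3) is the observation that both $\fkm_G$ and $R^{\det}R^{\det^{-1}}$ are generated by monomials (Remark~\ref{rem0}(2)), so the inclusion can be checked monomial by monomial. The paper's proof is terser and leaves the monomial divisibility details you spell out implicit, but there is no substantive difference.
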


\begin{proof}
    (1)$\Leftrightarrow$(2): This follows from Theorem \ref{t38}. 

    (2)$\Leftrightarrow$(3): Since $R^{\det}$ and $R^{\det^{-1}}$ are generated by monomials (Remark \ref{rem0}(2)), so is $R^{\det} R^{\det^{-1}}$. Since $\fkm_G$ is also generated by monomials, we get the assertion.
\end{proof}

We conclude this article with several examples. 
Caminata and Strazzanti \cite[Corollary 2.5]{CS} have proven that $R^G$ is nearly Gorenstein if $d=2$ and $\ell=1$ (hence, $R^G_\fkp$ is Gorenstein for all $\fkp\in {}^*\Spec R^G\setminus \{\fkm_G\}$). However, such an assertion cannot be expected even in the case where $d=3$ and $\ell=1$. 

% Thus, we provide several examples in the case where $d=3$ and $\ell=1$. 

\begin{ex} 
    Suppose that $d=3$ and $\ell=1$. Set $n=n_1$. Then the following hold true. 
    \begin{enumerate}[\rm(1)] 
        \item Let $n=4$ and $(t_{11}, t_{12}, t_{13}) = (1,1,3)$. Then $R^G$ is nearly Gorenstein but not Gorenstein. 
        \item Let $n=4$ and $(t_{11}, t_{12}, t_{13})  = (1,2,3)$. Then $R^G$ is Gorenstein on ${}^*\Spec R^G\setminus \{\fkm_G\}$, but $R^G$ is not nearly Gorenstein. 
        \item Let $n=6$ and $(t_{11}, t_{12}, t_{13})  = (1,1,3)$. Then $R^G$ is not Gorenstein on ${}^*\Spec R^G\setminus \{\fkm_G\}$. 
    \end{enumerate}
\end{ex}

\begin{proof}
    (1): This result is recorded in \cite[Table 1]{CS}, but we note a way to check the nearly Gorenstein property of $R^G$ for the convenience of readers.  
    We have $R^{\det^{-1}} =R^{(3)}$ since $\det^{-1}: G \to \mathrm{GL}(K); \sigma_1 \mapsto \det(\sigma_1)^{-1}=(\xi_1^{1+1+3})^{-1}=\xi_1^{-5} =\xi_1^3$. Since $R^{(3)} \subseteq R$, $R^{(1)} \subseteq R$, and $1\not\in R^{(3)}$, we get $1 \not\in R^{(3)} R^{(1)} = \tr_{R^G} (R^{(3)})$ by Theorem \ref{t38}. It follows that $R^{\det^{-1}} =R^{(3)}\not \cong R^G$ (see Remark \ref{f22}(4)). By \cite[Theorem 6.4.2(b)]{BH}, $R^G$ is not Gorenstein. (If one assumes that $K$ is a field of characteristic $0$, then this follows from \cite[Theorem 6.4.10]{BH}.)

    We next prove that $R^G$ is nearly Gorenstein. By Macaulay2 (\cite{Mac2}), one can check that the graded maximal ideal $\fkm_G$ of $R^G$ is 
    \[
    (X_1^3X_2, X_1^4, X_1^2X_2^2, X_1X_2^4, X_2^4, X_3^4, X_1X_3, X_2X_3).
    \]
    On the other hand, we have $R^{\det}=R^{(1)}$. Thus, one can also check that $X_1, X_2, X_3^3\in R^{(1)} = R^{\det}$. Since all the above monomials generating $\fkm_G$ are divided by some of  $X_1, X_2, X_3^3\in R^{\det}$, we get the assertion by Corollary \ref{c43}.

    (2): By Macaulay2 (\cite{Mac2}), one can check that the graded maximal ideal $\fkm_G$ of $R^G$ is 
    \[
    (X_2^2, X_1X_3, X_2X_3^2, X_1^2X_2, X_3^4, X_1^4).
    \]
    On the other hand, one can also check that $R^{\det} = R^{(2)}$. Then, both $X_1$ and $X_3$ are not in $R^{\det}$. Hence, the monomial $X_1X_3$, a part of monimal generators of $\fkm_G$, is not divided by any monomial in $R^{\det}$. By Corollary \ref{c43}, $R^G$ is not nearly Gorenstein. 

    However, one can also check that $X_1^2, X_2, X_3^2\in R^{(2)} = R^{\det}$; hence, $R^G$ is Gorenstein on ${}^*\Spec R^G\setminus~\{\fkm_G\}$ by Corollary \ref{c41}.

    (3): We have $R^{\det^{-1}} =R^{(1)}$. Then $X_3, \dots, X_3^6\not\in R^{\det^{-1}}$. By Corollary \ref{c41},  $R^G$ is not Gorenstein on ${}^*\Spec R^G\setminus \{\fkm_G\}$. 
\end{proof}

As a known result, if $R^G$ is a Veronese subring of $R$, then $\tr_{R^G} (R^\calX)\supseteq \fkm_G$ for all characters $\calX$ (\cite[Theorem 4.6]{HHS}). In particular, all Veronese subrings of $R$ are nearly Gorenstein. The following example shows that the nearly Gorenstein property of $R^G$ does not imply $\tr_{R^G} (R^\calX)\supseteq \fkm_G$ for all characters $\calX$ in general. 

\begin{ex} 
    Suppose that $d=3$ and $\ell=1$. Set $n=n_1$. Let $n=6$ and $(t_{11}, t_{12}, t_{13}) = (1,1,2)$. Then $R^G$ is nearly Gorenstein, but  $R^{(1)}$ is not locally free on ${}^*\Spec R^G\setminus \{\fkm_G\}$ (and thus $\tr_{R^G} (R^{(1)})\subsetneq \fkm_G$).
\end{ex}

\begin{proof}
    One can check that $R^G$ is nearly Gorenstein (see also \cite[Table 1]{CS}). On the other hand, $X_3, X_3^2, \dots, X_3^6 \not\in R^{(1)}$ since $2u\not\equiv 1 \mod 6$ for all $1\le u \le 6$. Hence, $R^{(1)}$ is not locally free on ${}^*\Spec R^G\setminus \{\fkm_G\}$ by Theorem \ref{t311} (see also Corollary \ref{c314}).
\end{proof}

%Let us give one example which is non-cyclic. 

%\begin{ex} \begin{color}{red} NON-cyclic example!\end{color}
%\end{ex}

%\begin{proof}

%\end{proof}

%%%%%%%%%%%%%%%%%%%%%%%%%%%%%%%%%%%%%%%%%%%%%%%%%%%%%%%%%%%%

%\addcontentsline{toc}{section}{references}

\end{document}